\newtheorem{theorem}{Theorem}[section]
\newtheorem{lemma}{Lemma}[section]
\numberwithin{equation}{section}
\begin{document}
\title{Asymptotics for the ratio and the zeros of multiple Charlier polynomials
\thanks{Research supported by FWO grant G.0427.09 and K.U.Leuven research grant OT/08/033.
F.N. is supported by a VLIR/UOS scholarship.}}
\author{Fran\c{c}ois Ndayiragije and Walter Van Assche \\ Katholieke Universiteit Leuven}
\date{\today}
\maketitle

\begin{abstract}
We investigate multiple Charlier polynomials and in particular we will use the (nearest neighbor) recurrence
relation to find the asymptotic behavior of the ratio of two multiple Charlier polynomials. This result
is then used to obtain the asymptotic distribution of the zeros, which is uniform on an interval.
We also deal with the case where one of the parameters of the various Poisson distributions depend on the degree
of the polynomial, in which case we obtain another asymptotic distribution of the zeros.
\end{abstract}

\section{Introduction}

Charlier polynomials $\{C_n^{(a)}, n = 0,1,2,\ldots\}$ are orthogonal polynomials for the Poisson distribution, i.e.,
\[   \sum_{k=0}^\infty C_n^{(a)}(k) C_m^{(a)}(k) \frac{a^k}{k!} = 0, \qquad n \neq m, \]
where $a > 0$. These polynomials are orthogonal on the positive integers and as a result their zeros are separated by the integers:
between two consecutive integers there can be at most one zero of $C_n^{(a)}$. Charlier polynomials have various applications, e.g.,  in
queueing theory \cite{KM} and recently \cite{vD}, in the analysis of the lengths of weakly increasing subsequences of random words \cite{Joh}, and in the totally asymmetric simple exclusion process (TASEP) \cite{BFPS}. Their asymptotic behavior has been studied by Maejima and Van Assche \cite{MaVA}, Kuijlaars and
Van Assche \cite{KuiVA}, Rui and Wong \cite{RW}, Goh \cite{Goh}, Dunster \cite{Dun} and most recently by Ou and Wong \cite{Ou} using the Riemann-Hilbert method.

We will investigate multiple Charlier polynomials, which are polynomials of one variable with orthogonality properties with respect to
more than one Poisson distribution. 
Take $r$ Poisson distributions with parameters $a_1, \ldots, a_r >0$ and such that $a_i \neq a_j$ whenever $i \neq j$. Let
 $\vec{n} = (n_1,n_2,\ldots,n_r)$ be a multi-index of size $|\vec{n}| = n_1+n_2+\cdots+n_r$, then the multiple Charlier polynomial
 $C_{\vec{n}}$ is the monic polynomial of degree $|\vec{n}|$ for which (\cite[p.~29--32]{ACV}, \cite[p.~632]{Ismail}, \cite{WVA})
 \[    \sum_{k=0}^\infty C_{\vec{n}}(k) k^\ell \frac{(a_j)^k}{k!} = 0, \qquad \ell=0,1,\ldots,n_j-1, \ j=1\ldots,r. \] 
For $r=1$ we retrieve the Charlier polynomials. The multiple Charlier polynomials can be obtained using the
Rodrigues formula \cite{ACV, Ismail, WVA}  
 \begin{equation}  \label{Rodrigues}
     C_{\vec{n}}(x) = (-1)^{|\vec{n}|} \left( \prod_{j=1}^r a_j^{n_j} \right) 
     \Gamma(x+1) \left( \prod_{j=1}^r  a_j^{-x} \nabla^{n_j} a_j^x \right) \frac{1}{\Gamma(x+1)}  
  \end{equation}
 where $\nabla$ is the backward difference operator, given by $\nabla f(x) = f(x)-f(x-1)$. An explicit formula for the multiple Charlier polynomials is
\begin{multline}  \label{explicit}
      C_{\vec{n}}(x) = \sum_{k_1=0}^{n_1} \cdots \sum_{k_r=0}^{n_r} (-n_1)_{k_1} \cdots (-n_r)_{k_r} (-x)_{k_1+k_2+\cdots+k_r}  \\
      \frac{(-a_1)^{n_1-k_1} (-a_2)^{n_2-k_2} \cdots (-a_r)^{n_r-k_r}}{k_1! k_2! \cdots k_r!}.   
\end{multline}

Multiple Charlier polynomials satisfy a number of (higher order) difference equations (Lee \cite{Lee} and Van Assche \cite{WVA}). 
They appear in remainder Pad\'e approximation for the exponential function \cite{PR}, as common eigenstates of a set of $r$ non-Hermitian oscillator
Hamiltonians \cite{Vinet}, and we believe that they are related to the orthogonal functions
appearing in two speed TASEP (totally asymmetric simple exclusion process) \cite{BFS}. 

In this paper we first obtain in Section 2 some properties of the multiple Charlier polynomials, such as the generating function and
the nearest neighbor recurrence relations. The zeros of multiple Charlier polynomials are real, positive and separated by the positive integers,
as is the case for the usual Charlier polynomials: between two positive integers, there can be at most one zero of a multiple Charlier polynomial
(see, e.g., \cite[Theorem 3.4]{PVA}). The largest zero of $C_{\vec{n}}$ is therefore $\geq |\vec{n}|-1$. In order to prevent the zeros to
go to infinity, we will use a scaling and consider the scaled polynomials $P_{\vec{n},N}(x) = C_{\vec{n}}(Nx)/N^{|\vec{n}|}$.
One of the main results in this paper is in Section 3 where we obtain the asymptotic behavior of the ratio of two scaled neighboring 
multiple Charlier polynomials. We use that result in Section 4 to obtain the asymptotic zero distribution of the scaled multiple Charlier polynomials. Another important result is in Secton 5 where we give the asymptotic behavior
(ratio asymptotics and zero distribution) when one of the parameters depends on the scaling $N$. This gives a different asymptotic zero distribution which is somewhat more interesting.

\section{Some properties of multiple Charlier polynomials}

\subsection{Generating function}
Charlier polynomials have the generating function \cite[Ch. VI, Eq.~(1.1)]{Chihara}
\begin{equation}  \label{GenChar}    
\sum_{n=0}^\infty C_n^{(a)}(x) \frac{t^n}{n!} = (1+t)^x e^{-at}, \qquad |t| < 1.  
\end{equation}
For multiple Charlier polynomials one has a multivariate generating function (with $r$ variables).

\begin{theorem}  \label{thm:gen}
Multiple Charlier polynomials have the following (multivariate) generating function
\begin{multline}  \label{generating}
   \sum_{n_1=0}^\infty \sum_{n_2=0}^\infty \cdots \sum_{n_r=0}^\infty  C_{\vec{n}}(x) \frac{t_1^{n_1} t_2^{n_2} \cdots 
    t_r^{n_r}}{n_1! n_2! \cdots n_r!}  \\
  = (1+t_1+t_2+\cdots+t_r)^x \exp (-a_1t_1-a_2t_2 - \cdots -a_r t_r). 
\end{multline} 
\end{theorem}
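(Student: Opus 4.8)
The plan is to substitute the explicit representation \eqref{explicit} into the left-hand side of \eqref{generating} and then disentangle the resulting $2r$-fold sum. Inserting the explicit formula into the generating series, each index $n_j$ enters only through the factor $\frac{t_j^{n_j}}{n_j!}(-n_j)_{k_j}(-a_j)^{n_j-k_j}$. Since $(-n_j)_{k_j}$ vanishes for $k_j>n_j$, I may extend every inner sum over $k_j$ to run to $\infty$, and then freely interchange the order of summation (justified by absolute convergence for $|t_1|+\cdots+|t_r|$ small, or read as an identity of formal power series).

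The key algebraic step is the change of variable $m_j=n_j-k_j$ for each $j$. Using $(-n_j)_{k_j}/n_j! = (-1)^{k_j}/(n_j-k_j)! = (-1)^{k_j}/m_j!$, the $j$-th variable contributes the product $\frac{(-t_j)^{k_j}}{k_j!}\cdot\frac{(-a_jt_j)^{m_j}}{m_j!}$. Because the only coupling between the indices is the single Pochhammer symbol $(-x)_{k_1+\cdots+k_r}$, which depends on the $k_j$ alone, the $m_j$-sums decouple completely. Summing each of them gives $\prod_{j=1}^r e^{-a_jt_j}=\exp(-a_1t_1-\cdots-a_rt_r)$, precisely the exponential factor on the right-hand side of \eqref{generating}.

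It then remains to evaluate $\sum_{k_1,\ldots,k_r\ge 0}(-x)_{k_1+\cdots+k_r}\prod_{j=1}^r \frac{(-t_j)^{k_j}}{k_j!}$. I would group these terms by the total degree $K=k_1+\cdots+k_r$; the inner sum over compositions $k_1+\cdots+k_r=K$ is exactly a multinomial expansion and collapses to $\frac{1}{K!}(-(t_1+\cdots+t_r))^K$. The remaining single sum $\sum_{K\ge 0}\frac{(-x)_K}{K!}(-(t_1+\cdots+t_r))^K$ is recognized, via $\binom{x}{K}=(-1)^K(-x)_K/K!$, as the binomial series for $(1+t_1+\cdots+t_r)^x$. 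Multiplying the two factors yields \eqref{generating}.

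The routine calculations are the Pochhammer manipulation and the multinomial collapse; the only point demanding care is the legitimacy of rearranging the multi-indexed sums. I expect this interchange of summation order to be the main (and only minor) obstacle: it is cleanest to regard \eqref{generating} as an identity of formal power series in $t_1,\ldots,t_r$, in which each coefficient is a finite sum so that no analytic convergence argument is needed, with absolute convergence near the origin available as an alternative justification.
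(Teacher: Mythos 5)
Your proof is correct, but it follows a genuinely different route from the paper. The paper proceeds by induction on $r$: the base case $r=1$ is the classical Charlier generating function \eqref{GenChar}, quoted from the literature, and the inductive step peels off the last index by rewriting \eqref{explicit} as $C_{\vec{n}}(x)=\sum_{k_r=0}^{n_r}C_{\vec{n}-n_r\vec{e}_r}(x-k_r)(-x)_{k_r}(-n_r)_{k_r}(-a_r)^{n_r-k_r}/k_r!$, after which a single interchange of the $n_r$- and $k_r$-sums and the one-variable binomial series finish the argument. You instead substitute \eqref{explicit} directly into the full $r$-fold series and disentangle all $2r$ indices at once: the substitution $m_j=n_j-k_j$ together with $(-n_j)_{k_j}/n_j!=(-1)^{k_j}/(n_j-k_j)!$ decouples the $m_j$-sums into $\prod_j e^{-a_jt_j}$, and the remaining coupled sum over $k_1,\ldots,k_r$ collapses by the multinomial theorem and the binomial series to $(1+t_1+\cdots+t_r)^x$. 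Your computation checks out at every step, and the rearrangement is legitimately justified either as an identity of formal power series (each coefficient being a finite sum) or by absolute convergence near the origin, with $|t_1+\cdots+t_r|<1$ needed for the final binomial series when $x$ is not a nonnegative integer. What your approach buys is self-containedness — the case $r=1$ drops out as a special case rather than being an imported base case — and a transparent accounting of where each factor on the right-hand side comes from (the exponential from the decoupled $m_j$-sums, the power from the coupled $k_j$-sums). What the paper's induction buys is brevity given the known classical result, and the intermediate identity expressing $C_{\vec{n}}$ in terms of $C_{\vec{n}-n_r\vec{e}_r}$ at shifted arguments, which has some independent interest.
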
  
\begin{proof}
We can use induction on $r$. For $r=1$ we have the familiar generating function for Charlier polynomials \eqref{GenChar}. 

Suppose the result is true for $r-1$, then observe that (\ref{explicit}) implies
\[  C_{\vec{n}}(x) = \sum_{k_r=0}^{n_r} C_{\vec{n}-n_r\vec{e}_r}(x-k_r) (-x)_{k_r} (-n_r)_{k_r} \frac{(-a_r)^{n_r-k_r}}{k_r!}.  \] 
Hence the multivariate generating function is
\begin{multline*}   (1+t_1+\cdots+t_{r-1})^{x} \exp(-a_1t_1-\cdots-a_{r-1}t_{r-1}) \\
\sum_{n_r=0}^\infty \sum_{k_r=0}^{n_r} 
  (1+t_1+\cdots+t_{r-1})^{-k_r} (-x)_{k_r} (-n_r)_{k_r} \frac{t_r^{n_r}}{n_r!} \frac{(-a_r)^{n_r-k_r}}{k_r!}. 
\end{multline*} 
Changing the order of summation gives
\begin{multline*}
(1+t_1+\cdots+t_{r-1})^{x} \exp(-a_1t_1-\cdots-a_{r-1}t_{r-1}) \\
 \sum_{k_r=0}^\infty (1+t_1+\cdots+t_{r-1})^{-k_r} \binom{x}{k_r} 
   \sum_{n_r=k_r}^{\infty}  \frac{t_r^{n_r}}{(n_r-k_r)!} (-a_r)^{n_r-k_r} 
\end{multline*}
and by putting $\ell=n_r-k_r$
\begin{multline*}  (1+t_1+\cdots+t_{r-1})^{x} \exp(-a_1t_1-\cdots-a_{r-1}t_{r-1}) \\
\sum_{k_r=0}^\infty (1+t_1+\cdots+t_{r-1})^{-k_r} t_r^{k_r} \binom{x}{k_r}
   \sum_{\ell=0}^{\infty}  \frac{t_r^{\ell}}{\ell!} (-a_r)^{\ell}.
\end{multline*}
Now use 
\[ \sum_{\ell=0}^{\infty}  \frac{t_r^{\ell}}{\ell!} (-a_r)^{\ell} = \exp(-a_r t_r)  \]
and
\[  \sum_{k_r=0}^\infty (1+t_1+\cdots+t_{r-1})^{-k_r} t_r^{k_r} \binom{x}{k_r}  =  \left(1+ \frac{t_r}{1+t_1+t_2+\cdots+t_{r-1}} \right)^x \]
to obtain the desired result.
\end{proof}

The region of convergence of this generating function is a log-convex set in $\mathbb{C}^r$, which is the case of all power series in several variables, and the series certainly converges whenever $|t_j| < 1/r$ for every $j \in \{ 1,2,\ldots,r\}$, or when $|t_j| < c_j$ for $1 \leq j \leq r$, where
$0 < c_j < 1$ and $\sum_{j=1}^r c_j = 1$.
As a corollary, one can obtain an integral representation of the multiple Charlier polynomial, by integrating $r$ times over a closed curve around $0$:
\[   \frac{C_{\vec{n}}(x)}{n_1! \cdots n_r!} 
      = \frac{1}{(2\pi i)^r} \oint \cdots \oint \frac{(1+z_1+\cdots+z_r)^x \exp(-a_1 z_1 - \cdots - a_r z_r)}
                             {z_1^{n_1+1} \cdots z_r^{n_r+1}} \ dz_1\cdots dz_r.  \]

\subsection{Recurrence relations}
For multiple orthogonal polynomials there is always a nearest neighbor recurrence relation
 of the form
\begin{equation}  \label{eq:recur}
   xP_{\vec{n}}(x) = P_{\vec{n}+\vec{e_k}}(x) + b_{\vec{n},k} P_{\vec{n}}(x)
    + \sum_{j=1}^r a_{\vec{n},j} P_{\vec{n}-\vec{e}_j}(x)  
\end{equation}
where $k=1,\ldots,r$ \cite[Thm.~ 23.1.11]{Ismail}, \cite{WVA2}, and $\vec{e}_k = (0,\ldots,0,1,0,\ldots,0)$ is the $k$th unit vector in $\mathbb{N}^r$.
The recurrence relation for multiple Charlier polynomials was given in \cite[p.~632]{Ismail} without proof. Here we will work out the details of the proof.
\begin{theorem}  \label{thm:rec}  
The nearest neighbor recurrence relation for multiple Charlier polynomials is  
\begin{equation}  \label{Char:recur}
   x C_{\vec{n}}(x) = C_{\vec{n}+\vec{e}_k}(x) + (a_k + |\vec{n}|) C_{\vec{n}}(x) + \sum_{j=1}^r n_j a_j C_{\vec{n}-\vec{e}_j}(x).  
\end{equation}
\end{theorem}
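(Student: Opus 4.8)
The plan is to read the recurrence \eqref{Char:recur} directly off the generating function \eqref{generating} of Theorem \ref{thm:gen}, thereby avoiding any appeal to the general theory behind \eqref{eq:recur}. Write $F(x;t_1,\ldots,t_r)$ for the common value of the two sides of \eqref{generating}, and abbreviate $S = t_1+\cdots+t_r$ and $A = a_1t_1+\cdots+a_rt_r$, so that $F = (1+S)^x e^{-A}$. The key observation is that each polynomial family appearing in \eqref{Char:recur} is the coefficient array (with respect to $t_1^{n_1}\cdots t_r^{n_r}/(n_1!\cdots n_r!)$) of a simple first-order operator applied to $F$. Concretely, differentiating $F$ in $t_k$ implements the forward shift, since
\[
   \frac{\partial F}{\partial t_k} = \sum_{\vec{n}} C_{\vec{n}+\vec{e}_k}(x) \frac{t_1^{n_1}\cdots t_r^{n_r}}{n_1!\cdots n_r!},
\]
while multiplying by $t_j$ implements $n_j$ times the backward shift,
\[
   t_j F = \sum_{\vec{n}} n_j\, C_{\vec{n}-\vec{e}_j}(x) \frac{t_1^{n_1}\cdots t_r^{n_r}}{n_1!\cdots n_r!},
\]
and the Euler operator is diagonal, $\sum_{j=1}^r t_j \partial F/\partial t_j$ producing the coefficient array $|\vec{n}|\, C_{\vec{n}}(x)$. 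All three identities follow from rewriting the factorials after the index shift.

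With this dictionary in hand, the recurrence \eqref{Char:recur} is equivalent to the single operator identity
\[
   x F = \frac{\partial F}{\partial t_k} + a_k F + \sum_{j=1}^r t_j \frac{\partial F}{\partial t_j} + \sum_{j=1}^r a_j t_j F,
\]
obtained by replacing each term of \eqref{Char:recur} by the corresponding operator from the dictionary: $C_{\vec{n}+\vec{e}_k}$ by $\partial_{t_k}F$, the term $(a_k+|\vec{n}|)C_{\vec{n}}$ by $a_kF + \sum_j t_j\partial_{t_j}F$, and $\sum_j n_j a_j C_{\vec{n}-\vec{e}_j}$ by $\sum_j a_j t_j F$. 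Once this identity is established, equating coefficients of $t_1^{n_1}\cdots t_r^{n_r}/(n_1!\cdots n_r!)$ on both sides yields \eqref{Char:recur} for every multi-index $\vec{n}$.

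It remains to verify the operator identity by direct differentiation of the closed form $F = (1+S)^x e^{-A}$. Here one computes $\partial F/\partial t_j = \bigl(x/(1+S) - a_j\bigr)F$ for each $j$, so that the right-hand side becomes
\[
   \Bigl(\tfrac{x}{1+S} - a_k\Bigr)F + a_kF + \Bigl(\tfrac{xS}{1+S} - A\Bigr)F + AF = \frac{x(1+S)}{1+S}\,F = xF,
\]
as required. I expect the only delicate point in the whole argument to be the bookkeeping of the factorials under the index shifts in the dictionary above; the final differentiation is routine. A reassuring consistency check emerges along the way: the only $k$-dependent contributions on the right are the $-a_k$ coming from $\partial_{t_k}F$ and the explicit $+a_k$, and these cancel, so the identity — and hence the recurrence — holds uniformly for every $k \in \{1,\ldots,r\}$.
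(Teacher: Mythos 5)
Your proof is correct, but it takes a genuinely different route from the paper. You derive \eqref{Char:recur} entirely from the generating function \eqref{generating}: the three dictionary entries (differentiation in $t_k$ as forward shift, multiplication by $t_j$ as $n_j$ times the backward shift, the Euler operator as multiplication by $|\vec{n}|$) are all correct after the factorial bookkeeping, the operator identity $xF = \partial_{t_k}F + a_kF + \sum_j t_j\partial_{t_j}F + \sum_j a_jt_jF$ does reduce to $xF=xF$ upon substituting $\partial_{t_j}F = (x/(1+S)-a_j)F$, and since the series converges in a polydisc around the origin, equating coefficients is legitimate. The paper instead invokes the general existence theorem for nearest neighbor recurrences \eqref{eq:recur} from Ismail's book, reads off $b_{\vec{n},k}=a_k+|\vec{n}|$ by comparing the subleading coefficient $\delta_{\vec{n}}$ extracted from the explicit formula \eqref{explicit}, and computes $a_{\vec{n},j}=n_ja_j$ from the moment-ratio formula via the Rodrigues formula \eqref{Rodrigues} and repeated summation by parts. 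Your argument is shorter and more self-contained: it needs only Theorem \ref{thm:gen} (proved just before this statement, so there is no circularity) and bypasses the general theory entirely, proving the identity directly rather than first positing the form \eqref{eq:recur} and then identifying its coefficients. What the paper's approach buys is a worked illustration of the general machinery (in particular the formula for $a_{\vec{n},j}$ as a ratio of sums), which transfers to families where no closed-form generating function is available; what yours buys is brevity and independence from that machinery. The one point worth making explicit in a final write-up is the justification for term-by-term differentiation and coefficient comparison, which follows from the convergence of \eqref{generating} for $|t_j|<1/r$ noted in the paper.
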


\begin{proof}
 From (\ref{explicit}) and $(-x)_n = (-1)^n x^n + (-1)^{n-1} \binom{n}{2} x^{n-1} + \cdots$
we find that
\[ C_{\vec{n}}(x) = x^{|\vec{n}|} + \delta_{\vec{n}} x^{|\vec{n}|-1} + \cdots, \]
where $\delta_{\vec{n}}$ can be found by taking $(k_1,k_2,\ldots,k_r)=(n_1,n_2,\ldots,n_r)$, which gives the contribution $-\binom{|\vec{n}|}{2}$ to $\delta_{\vec{n}}$, and for each $j$ with $1 \leq j \leq r$ we get
for $(k_1,k_2,\ldots,k_r) = (n_1,n_2,\ldots, n_j-1, \ldots, n_r)$ the contribution $-a_jn_j$, so that
\[   \delta_{\vec{n}} = - \binom{|\vec{n}|}{2} - \sum_{j=1}^r a_j n_j.  \]
If we compare the coefficient of $x^{|\vec{n}|}$ in (\ref{eq:recur}), then $b_{\vec{n},k}= \delta_{\vec{n}}
-\delta_{\vec{n}+\vec{e}_k}$, which for the multiple Charlier polynomials gives $b_{\vec{n},k} = |\vec{n}| + a_k$.
For the recurrence coefficients $a_{\vec{n},j}$ we can use \cite[Eq. (23.1.23)]{Ismail} 
\[   a_{\vec{n},j} = \frac{ \sum_{k=0}^\infty k^{n_j} C_{\vec{n}}(k) a_j^k/k! }
                         { \sum_{k=0}^\infty k^{n_j-1} C_{\vec{n}-\vec{e}_j}(k) a_k^k/k!}.  \]
The sums can be computed using the Rodrigues formula (\ref{Rodrigues}): the difference operators
$a_i^{-x} \nabla^{n_i} a_i^x$ $(i=1,2,\ldots,r)$ are commuting, so we can first apply  $a_j^{-x} \nabla^{n_j} a_j^x$ to find
\[   \sum_{k=0}^\infty k^{n_j} C_{\vec{n}}(k) \frac{a_j^k}{k!} = 
     (-1)^{|\vec{n}|} \prod_{i=1}^r a_i^{n_i}  \sum_{k=0}^\infty k^{n_j} \left( \nabla^{n_j} a_j^k \right)
     \left( \prod_{i=1,i\neq j}^r a_i^{-k} \nabla^{n_i} a_i^k \right) \frac{1}{k!}.  \]
Now use summation by parts $n_k$ times to find  
\begin{eqnarray*}
 \sum_{k=0}^\infty k^{n_j} C_{\vec{n}}(k) \frac{a_j^k}{k!} & = &
    (-1)^{|\vec{n}|} \prod_{i=1}^r a_i^{n_i}  
   (-1)^{n_j} \sum_{k=0}^\infty \left( \Delta^{n_j} k^{n_j} \right) a_j^k  
\left( \prod_{i=1,i\neq j}^r a_i^{-k} \nabla^{n_i} a_i^k \right) \frac{1}{k!} \\
  & = & (-1)^{|\vec{n}|} \prod_{i=1}^r a_i^{n_i}  
   (-1)^{n_j} n_j! \sum_{k=0}^\infty  a_j^k  
\left( \prod_{i=1,i\neq j}^r a_i^{-k} \nabla^{n_i} a_i^k \right) \frac{1}{k!}.
\end{eqnarray*}
If we change $n_j$ to $n_j-1$ then this gives 
\begin{multline*}
  \sum_{k=0}^\infty k^{n_j-1} C_{\vec{n}-\vec{e}_j}(k) \frac{a_j^k}{k!} \\
     =  (-1)^{|\vec{n}|-1} \left( \prod_{i=1}^r a_i^{n_i} \right) a_j^{-1}  
   (-1)^{n_j-1} (n_j-1)! \sum_{k=0}^\infty  a_j^k  
\left( \prod_{i=1,i\neq j}^r a_i^{-k} \nabla^{n_i} a_i^k \right) \frac{1}{k!}.  
\end{multline*}
Dividing both expressions then gives
\[   a_{\vec{n},j} = n_j a_j   .   \]
\end{proof}

The recurrence coefficients are quite simple in this case, and in particular $a_{\vec{n},j} = n_j a_j > 0$ whenever $n_j \in \mathbb{N}$.
This implies that the zeros of $C_{\vec{n}}$ and its nearest neighbors $C_{\vec{n}+\vec{e}_k}$ interlace for every $k \in \{1,2,\ldots,r\}$,
see \cite{HVA}. This will be useful in the next section.

\section{Ratio asymptotics}

There are various levels of asymptotic behavior to consider. In this paper we limit the analysis to ratio asymptotic behavior, i.e., the
asymptotic behavior of the ratio of two neighboring polynomials. In order to prevent the zeros from going to infinity, we use a scaling
and we will investigate the ratio $C_{\vec{n}+\vec{e}_k}(Nx)/C_{\vec{n}}(Nx)$ for $x \in \mathbb{C} \setminus [0,\infty)$, where
$N$ is of the order $|\vec{n}|$, i.e., $\lim_{N \to \infty} |\vec{n}|/N = t > 0$.

\begin{theorem}   \label{Thm1}
Suppose $n_j=\lfloor q_jn\rfloor$, with $0<q_j <1$ and $\sum_{j=1}^r q_j=1$, so that $|\vec{n}|/n \to 1$ as $n \to \infty$.
Let $a_i >0$ for $1 \leq i \leq r$ and $a_i \neq a_j$ 
whenever $i \neq j$. Then for $t >0$ and for every $k \in \{1,2,\ldots,r\}$ one has
\begin{equation}  \label{ratio}
    \lim_{n \to \infty, n/N \to t} \frac{C_{\vec{n}+\vec{e}_k}(Nx)}{N C_{\vec{n}}(Nx)} = x-t  
\end{equation}
uniformly for $x \in K$, where $K$ is a compact set in $\mathbb{C} \setminus [0,\infty)$. 
\end{theorem}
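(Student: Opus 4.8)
The plan is to work with the scaled monic polynomials $P_{\vec{n},N}(x) = C_{\vec{n}}(Nx)/N^{|\vec{n}|}$, for which the quantity in \eqref{ratio} is simply the ratio $P_{\vec{n}+\vec{e}_k,N}(x)/P_{\vec{n},N}(x)$, since a direct substitution gives $C_{\vec{n}+\vec{e}_k}(Nx)/\bigl(N C_{\vec{n}}(Nx)\bigr) = P_{\vec{n}+\vec{e}_k,N}(x)/P_{\vec{n},N}(x)$. Substituting $x \mapsto Nx$ in the nearest neighbor recurrence \eqref{Char:recur} of Theorem~\ref{thm:rec} and dividing by $N^{|\vec{n}|+1}$ turns it into the scaled recurrence
\[ x P_{\vec{n},N}(x) = P_{\vec{n}+\vec{e}_k,N}(x) + \frac{|\vec{n}|+a_k}{N}\, P_{\vec{n},N}(x) + \sum_{j=1}^r \frac{n_j a_j}{N^2}\, P_{\vec{n}-\vec{e}_j,N}(x). \]
Under the hypotheses $n_j = \lfloor q_j n\rfloor$ and $n/N \to t$ we have $(|\vec{n}|+a_k)/N \to t$, while each scaled coefficient $n_j a_j/N^2 \to 0$ because $n_j/N \to q_j t$ stays bounded. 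Dividing the scaled recurrence by $P_{\vec{n},N}(x)$ and solving for the ratio gives the exact identity
\[ \frac{P_{\vec{n}+\vec{e}_k,N}(x)}{P_{\vec{n},N}(x)} = x - \frac{|\vec{n}|+a_k}{N} - \sum_{j=1}^r \frac{n_j a_j}{N^2}\, \frac{P_{\vec{n}-\vec{e}_j,N}(x)}{P_{\vec{n},N}(x)}, \]
so everything reduces to controlling the backward ratios $P_{\vec{n}-\vec{e}_j,N}(x)/P_{\vec{n},N}(x)$ uniformly on $K$.

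The key step, and the main obstacle, is a uniform bound on these backward ratios. I would obtain it from the interlacing of zeros noted right after Theorem~\ref{thm:rec}: since $a_{\vec{n},j} = n_j a_j > 0$, the zeros of $C_{\vec{n}-\vec{e}_j}$ and $C_{\vec{n}}$ interlace. Both $P_{\vec{n}-\vec{e}_j,N}$ and $P_{\vec{n},N}$ are monic, their degrees differ by one, and all zeros of $P_{\vec{n},N}$ are real and positive, say $x_1 < \cdots < x_{|\vec{n}|}$ in $[0,\infty)$. A partial fraction decomposition then yields
\[ \frac{P_{\vec{n}-\vec{e}_j,N}(x)}{P_{\vec{n},N}(x)} = \sum_{i=1}^{|\vec{n}|} \frac{\beta_i}{x - x_i}, \]
where interlacing forces all residues $\beta_i$ to have the same sign, and comparing the $1/x$ behaviour as $x \to \infty$ gives $\sum_i \beta_i = 1$; hence $\beta_i > 0$ and the right-hand side is the Cauchy transform of a probability measure supported on $[0,\infty)$. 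Consequently, for $x \in K$ with $\delta = \operatorname{dist}(K,[0,\infty)) > 0$,
\[ \left| \frac{P_{\vec{n}-\vec{e}_j,N}(x)}{P_{\vec{n},N}(x)} \right| \le \sum_{i=1}^{|\vec{n}|} \frac{\beta_i}{|x - x_i|} \le \frac{1}{\delta}, \]
a bound uniform in $\vec{n}$ and in $x \in K$.

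With this bound the conclusion is immediate and no normal-family argument is needed: in the exact identity for $P_{\vec{n}+\vec{e}_k,N}/P_{\vec{n},N}$ the constant $(|\vec{n}|+a_k)/N \to t$, and each term of the sum is bounded in absolute value by $(n_j a_j/N^2)(1/\delta) \to 0$ uniformly on $K$. Therefore
\[ \frac{C_{\vec{n}+\vec{e}_k}(Nx)}{N C_{\vec{n}}(Nx)} = \frac{P_{\vec{n}+\vec{e}_k,N}(x)}{P_{\vec{n},N}(x)} \longrightarrow x - t \]
uniformly for $x \in K$, which is \eqref{ratio}. The only genuinely delicate point is the uniform boundedness of the backward ratio; its representation as a Cauchy transform of a probability measure (total mass exactly $1$, coming from monicity together with interlacing) is what makes the bound hold uniformly in the degree, and it is this observation that drives the whole argument.
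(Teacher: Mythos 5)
Your proposal is correct and follows essentially the same route as the paper's own proof: the scaled nearest neighbor recurrence, the partial fraction decomposition of the backward ratio with positive residues summing to $1$ (from interlacing and monicity), the resulting uniform bound $1/\delta$ on compact subsets of $\mathbb{C}\setminus[0,\infty)$, and the observation that $n_j a_j/N^2 \to 0$ while $(|\vec{n}|+a_k)/N \to t$. There is nothing to add; your remark that no normal-family argument is needed here is also consistent with the paper, which reserves that device for the case of $N$-dependent parameters.
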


\begin{proof}
We will use the notation $P_{\vec{n},N}(x) = C_{\vec{n}}(Nx)/N^{|\vec{n}|}$ for the monic and rescaled multiple Charlier polynomials.
The zeros of $C_{\vec{n}-\vec{e}_j}$ and $C_{\vec{n}}$ are real, positive and interlace (since $a_{\vec{n},j} = a_j n_j > 0$ whenever $n_j > 0$,
see \cite{HVA}), hence we have the partial fractions decomposition
\[     \frac{P_{\vec{n}-\vec{e}_j,N}(x)}{P_{\vec{n},N}(x)} = \sum_{i=1}^{|\vec{n}|} \frac{A_{\vec{n},i}}{x- x_{\vec{n},i}/N}, \]
where $\{x_{\vec{n},i} : 1 \leq i \leq |\vec{n}|\}$ are the zeros of $C_{\vec{n}}$ and $A_{\vec{n},i} > 0$ for every $i \leq |\vec{n}|$.
Let $K$ be a compact set in $\mathbb{C} \setminus [0, \infty)$, then for $x \in K$ we have that
\[ \left|  \frac{P_{\vec{n}-\vec{e}_j,N}(x)}{P_{\vec{n},N}(x)} \right|
   \leq \sum_{i=1}^{|\vec{n}|} \frac{A_{\vec{n},i}}{|x- x_{\vec{n},i}/N|} \leq  \frac{1}{\delta} \sum_{i=1}^{|\vec{n}|} A_{\vec{n},i}, \]
where 
\[    \delta = \inf \{ |z-y| : z \in K, y \in [0,\infty) \} > 0  \]
is the minimal distance between $K$ and $[0,\infty)$. Since $P_{\vec{n},N}$ and $P_{\vec{n}-\vec{e}_j,N}$ are monic polynomials, one has
$\sum_{i=1}^{|\vec{n}|} A_{\vec{n},i} = 1$, so that we have the bound
\begin{equation}   \label{ratiobound}
     \left|  \frac{P_{\vec{n}-\vec{e}_j,N}(x)}{P_{\vec{n},N}(x)} \right|  \leq \frac{1}{\delta}, 
\end{equation}uniformly for $x \in K$.
Take the recurrence relation \eqref{Char:recur} with $x$ replaced by $Nx$, and divide by $C_{\vec{n}}(Nx)$, which is allowed since
$x \in K$ cannot be a zero, then we find
\[     x = \frac{P_{\vec{n}+\vec{e}_k,N}(x)}{P_{\vec{n},N}(x)} + \frac{a_k + |\vec{n}|}{N} 
       + \sum_{j=1}^r   \frac{n_ja_j}{N^2} \frac{P_{\vec{n}-\vec{e}_j,N}(x)}{P_{\vec{n},N}(x)} . \]
If we use the bound \eqref{ratiobound}, then this gives
\[  \left|  \frac{P_{\vec{n}+\vec{e}_k,N}(x)}{P_{\vec{n},N}(x)} - x + \frac{a_k + |\vec{n}|}{N} \right|
    \leq   \frac{1}{\delta} \sum_{j=1}^r  \frac{n_j a_j}{N^2}. \]
Clearly, when $n,N \to \infty$ in such a way that $n/N \to t$, we have
\[    \lim_{n \to \infty, n/N \to t} \frac{a_k + |\vec{n}|}{N} = \lim_{n \to \infty, n/N \to t} \frac{|\vec{n}|}{n} \frac{n}{N} = t, \]
and
\[    \lim_{n \to \infty, n/N \to t} \frac{a_j n_j}{N^2} = a_j \frac{n_j}{n} \frac{n}{N^2} = 0, \]
so that
\[   \lim_{n \to \infty, n/N \to t}  \frac{P_{\vec{n}+\vec{e}_k,N}(x)}{P_{\vec{n},N}(x)} = x- t, \]
uniformly for $x \in K$, which proves the theorem.
\end{proof}

Observe that the same result will hold for any family of multiple orthogonal polynomials for which $a_{\vec{n},j} > 0$ whenever $n_j >0$
and
\[    \lim_{n \to \infty} \frac{b_{\vec{n},k}}{n} = 1, \qquad \lim_{n \to \infty} \frac{a_{\vec{n},j}}{n^2} = 0, \]
where $n_j = \lfloor q_j n \rfloor$, with $0 < q_j < 1$ and $\sum_{j=1}^r q_j = 1$. The fact that $a_{\vec{n},j}/n^2 \to 0$ simplifies
the asymptotic analysis a lot and the limit function is an easy polynomial function of degree 1. In general, the asymptotic analysis
for ratios of multiple orthogonal polynomials would involve a limit function which is the solution of an algebraic equation of degree $r+1$.

\section{Asymptotic distribution of the zeros}

Next, we will obtain the asymptotic distribution of the (scaled) zeros of the multiple Charlier polynomials. For this, we introduce
the zero measure
\[   \nu_{n,N} = \frac{1}{|\vec{n}|} \sum_{i=1}^{|\vec{n}|} \delta_{x_{\vec{n},i}/N} \]
and we want to show that these (probability) measures converge weakly to a (probability) measure $\nu_t$ as $n, N \to \infty$ and $n/N \to t > 0$,
which then describes the asymptotic distribution of the zeros. Again we will take multi-indices $\vec{n}$ such that $n_j = \lfloor nq_j \rfloor$,
where $0 < q_j < 1$ and $\sum_{j=1}^r q_j = 1$, so that $|\vec{n}|/n \to 1$ as $n$ tends to infinity. In order to prove this weak convergence,
we will investigate their Stieltjes transform
\[     \int \frac{d \nu_{n,N}(y)}{x-y} = \frac{1}{|\vec{n}|} \frac{P_{\vec{n},N}'(x)}{P_{\vec{n},N}(x)}, \qquad x \in \mathbb{C} \setminus [0,\infty), \]
where $P_{\vec{n},N}(x) = C_{\vec{n}}(Nx)/N^{|\vec{n}|}$, 
and show that they converge to a function, which we can identify as the Stieltjes transform of a measure $\nu_t$. The Grommer-Hamburger
theorem \cite{GH} then tells us that the measures $\nu_{n,N}$ converge weakly to $\nu_t$ as $n,N \to \infty$ and $n/N \to t$.

\begin{theorem} \label{Thm2}
Suppose $n_j=\lfloor q_jn\rfloor$, with $0<q_j <1$ and $\sum_{j=1}^r q_j=1$ and that $a_i >0$ for $1 \leq i \leq r$ and $a_i \neq a_j$ 
whenever $i \neq j$. Let $x_{\vec{n},1} < x_{\vec{n},2} < \cdots < x_{\vec{n},|\vec{n}|}$ be the zeros of $C_{\vec{n}}$. Then
\begin{equation} \label{zerodist}
   \lim_{n,N \to \infty, n/N \to t} \frac{1}{|\vec{n}|} \sum_{j=1}^{|\vec{n}|} f(x_{\vec{n},j}/N) = \frac{1}{t} \int_0^t f(x)\, dx  
\end{equation}
for every bounded continuous function on $[0,\infty)$. This means that the zeros of $C_{\vec{n}}(Nx)$ are asymptotically uniform
on the interval $[0,t]$ when $n,N \to \infty$ and $n/N \to t > 0$.
\end{theorem}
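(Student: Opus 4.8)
The plan is to follow the route sketched just before the statement: I would compute the limit of the Stieltjes transform
\[
  \frac{1}{|\vec{n}|}\frac{P_{\vec{n},N}'(x)}{P_{\vec{n},N}(x)}, \qquad x \in \mathbb{C}\setminus[0,\infty),
\]
show that it converges, locally uniformly on $\mathbb{C}\setminus[0,\infty)$, to the Stieltjes transform of a probability measure $\nu_t$, and then invoke the Grommer--Hamburger theorem \cite{GH} to upgrade this into weak convergence, which is exactly \eqref{zerodist}. The target for $\nu_t$ is the normalized Lebesgue measure on $[0,t]$, whose Stieltjes transform is $\frac1t\int_0^t\frac{du}{x-u}$, so the whole computation is guided by trying to recognize this integral as a Riemann sum.

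First I would turn the logarithmic derivative into a telescoping sum over single steps, which is the form to which Theorem \ref{Thm1} applies. Fix any lattice path $\vec{0}=\vec{n}(0),\vec{n}(1),\ldots,\vec{n}(|\vec{n}|)=\vec{n}$ with $\vec{n}(s+1)=\vec{n}(s)+\vec{e}_{k(s)}$ and $|\vec{n}(s)|=s$, and put $R_s(x)=P_{\vec{n}(s+1),N}(x)/P_{\vec{n}(s),N}(x)$. Since the zeros of the rescaled polynomials all lie in $[0,\infty)$, each $R_s$ is holomorphic and non-vanishing on $\mathbb{C}\setminus[0,\infty)$, and because $P_{\vec{0},N}\equiv 1$ the logarithmic derivatives telescope,
\[
  \frac{P_{\vec{n},N}'(x)}{P_{\vec{n},N}(x)} = \sum_{s=0}^{|\vec{n}|-1}\frac{R_s'(x)}{R_s(x)}.
\]
The crucial point is that I would not use the bare limit \eqref{ratio}, but rather the \emph{uniform finite-$N$ estimate} produced inside the proof of Theorem \ref{Thm1}, namely
\[
  \left|R_s(x) - x + \frac{a_{k(s)}+s}{N}\right| \le \frac{1}{\delta}\sum_{j}\frac{n_j(s)\,a_j}{N^2} \le \frac{(\max_j a_j)\,|\vec{n}|}{\delta N^2},
\]
valid uniformly in $s$ and in $x$ on a compact $K\subset\mathbb{C}\setminus[0,\infty)$, since $\sum_j n_j(s)=s\le|\vec{n}|$.

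Because $|\vec{n}|/N\to t$, the right-hand side above is $O(1/N)$ uniformly in $s$, so $R_s(x)=x-s/N+O(1/N)$ uniformly on $K$; enlarging $K$ slightly and using Cauchy's estimate gives $R_s'(x)=1+O(1/N)$, and since $|x-s/N|\ge\delta$ on $K$ we get $R_s'(x)/R_s(x)=\frac{1}{x-s/N}+O(1/N)$, again uniformly in $s$ and $x$. Summing, the error terms contribute $\frac{1}{|\vec{n}|}\cdot|\vec{n}|\cdot O(1/N)=O(1/N)\to 0$, while the main term is a Riemann sum,
\[
  \frac{1}{|\vec{n}|}\sum_{s=0}^{|\vec{n}|-1}\frac{1}{x-s/N}
  = \frac{N}{|\vec{n}|}\cdot\frac{1}{N}\sum_{s=0}^{|\vec{n}|-1}\frac{1}{x-s/N}
  \longrightarrow \frac{1}{t}\int_0^t\frac{du}{x-u},
\]
the factor $N/|\vec{n}|\to 1/t$ and the mesh-$1/N$ partition of $[0,|\vec{n}|/N]\to[0,t]$ producing the stated integral. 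Thus the Stieltjes transform converges locally uniformly to $\frac1t\int_0^t\frac{du}{x-u}$, which I recognize as the Stieltjes transform of $\nu_t$.

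Finally I would conclude via Grommer--Hamburger: as $\nu_{n,N}$ and $\nu_t$ are probability measures and the transforms converge to that of $\nu_t$ (with total mass preserved, since $\lim_{y\to\infty} iy$ times the transform at $iy$ equals $1$ in both cases, so no mass escapes to infinity), we get $\nu_{n,N}\to\nu_t$ weakly, hence $\frac{1}{|\vec{n}|}\sum_j f(x_{\vec{n},j}/N)\to\frac1t\int_0^t f(x)\,dx$ for every bounded continuous $f$, which is \eqref{zerodist}. The main obstacle is the middle step: one must keep the per-step estimates \emph{uniform in the telescoping index} $s$ so that the accumulation of $|\vec{n}|\approx tN$ errors of size $O(1/N)$ still washes out after dividing by $|\vec{n}|$. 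This is precisely where the explicit bound from the proof of Theorem \ref{Thm1} (rather than the limit alone) is needed, together with the observation that the single-ratio limit $x-\tau$ depends only on $\tau=|\vec{n}(s)|/N$ and neither on which component is incremented nor on the proportions $(q_j)$ — which is exactly what makes the path-telescoping collapse to a clean one-dimensional Riemann sum.
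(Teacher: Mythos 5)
Your proof is correct, and it arrives at the key limit \eqref{P'P} by a genuinely different decomposition than the paper's. The paper proceeds by induction on $r$: it peels off the last component, applies the induction hypothesis to $P_{\vec{n}-n_r\vec{e}_r,N}$ (a multiple Charlier polynomial with only $r-1$ parameters) to produce the uniform measure on $[0,(1-q_r)t]$, rewrites the remaining $n_r$-term telescoping sum in the $\vec{e}_r$ direction as an integral over $s\in[0,1]$, and invokes the limit of Theorem \ref{Thm1} inside that integral to produce the uniform measure on $[(1-q_r)t,t]$; the two pieces then reassemble into $\frac1t\int_0^t\frac{dy}{x-y}$. You instead telescope along a single lattice path of length $|\vec{n}|$ from $\vec{0}$ to $\vec{n}$ and collapse everything into one one-dimensional Riemann sum, exploiting the observation that the one-step limit $x-|\vec{m}|/N$ depends only on $|\vec{m}|/N$ and not on which coordinate is incremented. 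This buys two things. First, you avoid the induction and the attendant bookkeeping of re-normalized ratios $q_j/(1-q_rs)$ needed when Theorem \ref{Thm1} is applied to the truncated multi-indices. Second, and more substantively, by carrying the explicit finite-$N$ estimate from the proof of Theorem \ref{Thm1} through the sum you make the uniformity in the telescoping index explicit; the paper's step of passing the limit of Theorem \ref{Thm1} under the integral sign $\int_0^1(\cdots)\,ds$ tacitly relies on exactly this uniform bound (to justify dominated convergence), so your version is, if anything, the more carefully justified one. Your Cauchy-estimate argument for $R_s'(x)=1+\mathcal{O}(1/N)$ and the mass-preservation check needed to upgrade vague to weak convergence in the Grommer--Hamburger step are both handled correctly.
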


\begin{proof}
We will prove that
\begin{equation}  \label{P'P}
    \lim_{n,N \to \infty, n/N \to t} \frac{1}{|\vec{n}|} \frac{P_{\vec{n},N}'(x)}{P_{\vec{n},N}(x)}
    = \frac{1}{t} \int_0^t \frac{1}{x-y} \, dy  
\end{equation}
uniformly for $x \in K$, where $K$ is a compact set in $\mathbb{C} \setminus [0,\infty)$, which by the Grommer-Hamburger
theorem (see, e.g., \cite{GH}) is equivalent with the weak convergence to the uniform measure on $[0,t]$.
We will prove this by induction on $r$. For $r=1$ we deal with the zeros of Charlier polynomials and the multi-index $\vec{n}$ is an integer
which we denote by $n$. Observe that
\[     \frac{1}{n} \frac{P_{n,N}'(x)}{P_{n,N}(x)} 
  = \frac{1}{n} \sum_{k=0}^{n-1} \left( \frac{P_{k+1,N}'(x)}{P_{k+1,N}(x)} - \frac{P_{k,N}'(x)}{P_{k,N}(x)} \right) \]
and straightforward calculus gives
\[      \frac{P_{k+1,N}'(x)}{P_{k+1,N}(x)} - \frac{P_{k,N}'(x)}{P_{k,N}(x)} 
     = \left( \frac{P_{k+1,N}(x)}{P_{k,N}(x)} \right)' \Big/  \left(\frac{P_{k+1,N}(x)}{P_{k,N}(x)} \right). \]
Hence we may write
\[    \frac{1}{n} \frac{P_{n,N}'(x)}{P_{n,N}(x)} = \frac{1}{n} \sum_{k=0}^{n-1}
     \left( \frac{P_{k+1,N}(x)}{P_{k,N}(x)} \right)' \Big/  \left(\frac{P_{k+1,N}(x)}{P_{k,N}(x)} \right) . \]
We can rewrite the sum as an integral by putting $k = \lfloor ns \rfloor$, so that
\[   \frac{1}{n} \frac{P_{n,N}'(x)}{P_{n,N}(x)} =    \int_0^1 
 \left( \frac{P_{\lfloor ns \rfloor +1,N}(x)}{P_{\lfloor ns \rfloor,N}(x)} \right)' 
    \Big/  \left(\frac{P_{\lfloor ns \rfloor+1,N}(x)}{P_{\lfloor ns \rfloor,N}(x)} \right) \ ds . \]
Now we let $n,N \to \infty$ in such a way that $n/N \to t$, and we use Theorem \ref{Thm1} (with $r=1$) to find that
uniformly for $x \in K$ ($K$ a compact set in $\mathbb{C} \setminus [0,\infty)$)
\[   \lim_{n,N \to \infty, n/N \to t} \frac{1}{n} \frac{P_{n,N}'(x)}{P_{n,N}(x)}
     = \int_0^1  \frac{(x-st)'}{x-st} \ ds, \]
where the $'$ in the integral is a derivative with respect to the variable $x$.
The integral on the right is (use $st=y$)
\[   \int_0^1  \frac{1}{x-st} \, ds  = \frac{1}{t} \int_0^t \frac{1}{x-y} \, dy , \]
which proves \eqref{P'P} for $r=1$.

Now suppose that \eqref{P'P} is true for $r-1$. Observe that
\begin{equation}  \label{r}
     \frac{P_{\vec{n},N}'(x)}{P_{\vec{n},N}(x)} =
      \frac{P_{\vec{n}-n_r\vec{e}_r,N}'(x)}{P_{\vec{n}-n_r\vec{e}_r,N}(x)}
       + \sum_{k=0}^{n_r-1}   \left( \frac{P_{\vec{n}-k\vec{e}_r,N}(x)}{P_{\vec{n}-(k+1)\vec{e}_r,N}(x)} \right)' 
          \Big/  \left(\frac{P_{\vec{n}-k\vec{e}_r,N}(x)}{P_{\vec{n}-(k+1)\vec{e}_r,N}(x)} \right). 
\end{equation}
The multiple orthogonal polynomial $P_{\vec{n}-n_r\vec{e}_r}$ is in fact a multiple orthogonal polynomial with only $r-1$
measures $(\mu_1,\ldots,\mu_{r-1})$, hence we can use the induction hypothesis to find
\begin{equation}  \label{r-1}
   \lim_{n,N \to \infty} \frac{1}{|\vec{n}|-n_r} \frac{P_{\vec{n}-n_r\vec{e}_r,N}'(x)}{P_{\vec{n}-n_r\vec{e}_r,N}(x)}
      =   \frac{1}{(1-q_r)t} \int_0^{(1-q_r)t} \frac{1}{x-y}\, dy . 
\end{equation}     
Note that $(|\vec{n}|-n_r)/n \to 1 - q_r$, which explains the appearance of $1-q_r$ in the last formula. We can write the sum as an integral
by taking $k = \lfloor n_r s \rfloor$:
\begin{multline*} 
\frac{1}{n_r} \sum_{k=0}^{n_r-1}   \left( \frac{P_{\vec{n}-k\vec{e}_r,N}(x)}{P_{\vec{n}-(k+1)\vec{e}_r,N}(x)} \right)' 
          \Big/  \left(\frac{P_{\vec{n}-k)\vec{e}_r,N}(x)}{P_{\vec{n}-(k+1)\vec{e}_r,N}(x)} \right)  \\
    = \int_0^1  \left( \frac{P_{\vec{n}-\lfloor n_r s \rfloor\vec{e}_r,N}(x)}{P_{\vec{n}-(\lfloor n_r s \rfloor+1)\vec{e}_r,N}(x)} \right)' 
     \Big/  \left(\frac{P_{\vec{n}-\lfloor n_r s \rfloor\vec{e}_r,N}(x)}{P_{\vec{n}-(\lfloor n_r s \rfloor+1)\vec{e}_r,N}(x)} \right)\ ds.  
\end{multline*}
Now use Theorem \ref{Thm1} to find
\begin{multline}  \label{sum}
 \lim_{n,N \to \infty, n/N \to t}  
  \frac{1}{n_r} \sum_{k=0}^{n_r-1}   \left( \frac{P_{\vec{n}-k\vec{e}_r,N}(x)}{P_{\vec{n}-(k+1)\vec{e}_r,N}(x)} \right)' 
          \Big/  \left(\frac{P_{\vec{n}-k\vec{e}_r,N}(x)}{P_{\vec{n}-(k+1)\vec{e}_r,N}(x)} \right)  \\
  = \int_0^1 \frac{(x-(1-q_rs)t)'}{x-(1-q_rs)t} \, ds = \frac{1}{q_rt} \int_{(1-q_r)t}^{t} \frac{1}{x-y} \, dy 
\end{multline}
where the last equality follows after using the substitution $y=(1-q_rs)t$. Note that $(|\vec{n}|-\lfloor n_r s\rfloor)/n \to 1-q_rs$,
which explains the factor $1-q_rs$ in the asymptotic formula. Now combine \eqref{r-1} and \eqref{sum} in \eqref{r} to find
\begin{eqnarray*}
  \lim_{n,N \to \infty, n/N \to t} \frac{1}{|\vec{n}|} \frac{P_{\vec{n},N}'(x)}{P_{\vec{n},N}(x)}
    &=&  \frac{1-q_r}{(1-q_r)t} \int_0^{(1-q_r)t} \frac{1}{x-y} \, dy + \frac{q_r}{q_r t} \int_{(1-q_r)t}^t \frac{1}{x-y}\, dy \\
    &=& \frac{1}{t} \int_0^t \frac{1}{x-y} \, dy, 
\end{eqnarray*}
which proves \eqref{P'P}.
\end{proof}

\section{Parameters depending on the degree}
We get more interesting asymptotics when some of the parameters depend on $N$ and grow together with the degree $|\vec{n}|$. The case where only
one parameter depends on $N$ can be worked out in detail.

\begin{theorem} \label{thm3}
Suppose $n_j = \lfloor q_jn\rfloor$, with $0 < q_j < 1$ and $\sum_{j=1}^r q_j = 1$, so that $|\vec{n}|/n \to 1$ as $n \to <\infty$.
Consider Poisson distributions with parameters $(a_1,a_2,\ldots,a_{r-1},Na_r)$, i.e., the last parameter grows linearly with $N$.
Then for $t>0$ one has
\begin{equation}  \label{gr}
  \lim_{n,N\to \infty,\ n/N \to t} \frac{C_{\vec{n}+\vec{e}_r}(Nx)}{N C_{\vec{n}}(Nx)} =
    \frac{x-a_r-t + \sqrt{(x-a_r-t)^2-4a_rq_r t}}{2} := g_r(x) 
\end{equation}
and for $1 \leq k < r$
\begin{equation} \label{gnotr}
  \lim_{n,N\to \infty,\ n/N \to t} \frac{C_{\vec{n}+\vec{e}_k}(Nx)}{N C_{\vec{n}}(Nx)} =
    x-t- \frac{a_rq_rt}{g_r(x)} 
\end{equation}
uniformly on compact sets of $\mathbb{C} \setminus [0,\infty)$.
\end{theorem}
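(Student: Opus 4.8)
The plan is to mimic the proof of Theorem \ref{Thm1}, the only structural change being that the $r$th Poisson parameter is now $Na_r$ rather than $a_r$, so the $j=r$ term in the nearest neighbor recurrence no longer disappears in the limit. Writing \eqref{Char:recur} for the parameters $(a_1,\dots,a_{r-1},Na_r)$, replacing $x$ by $Nx$, and dividing by $NC_{\vec{n}}(Nx)$, I would obtain for $k=r$
\[
 x = \frac{C_{\vec{n}+\vec{e}_r}(Nx)}{NC_{\vec{n}}(Nx)} + a_r + \frac{|\vec{n}|}{N} + \frac{n_ra_r}{N}\,\frac{P_{\vec{n}-\vec{e}_r,N}(x)}{P_{\vec{n},N}(x)} + \sum_{j=1}^{r-1}\frac{n_ja_j}{N^2}\,\frac{P_{\vec{n}-\vec{e}_j,N}(x)}{P_{\vec{n},N}(x)} .
\]
Since $a_{\vec{n},j}=n_ja_j>0$ still holds (with $a_{\vec{n},r}=n_rNa_r>0$), the zeros continue to interlace and the partial fraction argument leading to the bound \eqref{ratiobound} is unchanged; hence every ratio $P_{\vec{n}-\vec{e}_j,N}/P_{\vec{n},N}$ is bounded by $1/\delta$ uniformly on $K$. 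Consequently the last sum is $O(1/N)$ and drops out, and $|\vec{n}|/N\to t$, while the decisive difference with Theorem \ref{Thm1} is that $n_ra_r/N = a_r(n_r/n)(n/N)\to a_rq_rt\neq 0$.

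Next I would close the recurrence into an algebraic equation. Setting $g_r(x)=\lim C_{\vec{n}+\vec{e}_r}(Nx)/(NC_{\vec{n}}(Nx))$ and noting that $P_{\vec{n}-\vec{e}_r,N}/P_{\vec{n},N}$ is the reciprocal of $C_{\vec{n}}(Nx)/(NC_{\vec{n}-\vec{e}_r}(Nx))$, which tends to the same limit $g_r(x)$ because the asymptotic regime $n/N\to t$, $n_r/n\to q_r$ is unchanged under the shift $\vec{n}\mapsto\vec{n}-\vec{e}_r$, the displayed identity passes in the limit to $x = g_r + a_r + t + a_rq_rt/g_r$, i.e. $g_r^2-(x-a_r-t)g_r+a_rq_rt=0$. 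Solving this quadratic and selecting the branch by the requirement $g_r(x)\sim x$ as $x\to\infty$ (forced because $P_{\vec{n}+\vec{e}_r,N}/P_{\vec{n},N}$ is a ratio of monic polynomials of degrees $|\vec{n}|+1$ and $|\vec{n}|$) yields \eqref{gr}; one checks that the two branch points $a_r+t\pm2\sqrt{a_rq_rt}$ are real and nonnegative, so $g_r$ is analytic on $\mathbb{C}\setminus[0,\infty)$ and, being the reciprocal of a bounded limit, does not vanish there.

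For $1\le k<r$ I would run the same computation with $\vec{e}_k$ in place of $\vec{e}_r$. The only change is that $b_{\vec{n},k}=|\vec{n}|+a_k$ contributes $a_k/N\to 0$ in place of $a_r$, while the summation term over $j$ is literally the same and again contributes only its $j=r$ part $a_rq_rt/g_r(x)$ in the limit. This immediately gives $x = \lim C_{\vec{n}+\vec{e}_k}(Nx)/(NC_{\vec{n}}(Nx)) + t + a_rq_rt/g_r(x)$, which rearranges to \eqref{gnotr}.

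I expect the genuine obstacle to be the step used glibly above: proving that $C_{\vec{n}+\vec{e}_r}(Nx)/(NC_{\vec{n}}(Nx))$ actually converges and that the downshifted ratio tends to the reciprocal of the same limit. Unlike Theorem \ref{Thm1}, where the limit fell out directly from the bound, here the surviving nonlinear term couples consecutive ratios. The clean way is to invoke Montel's theorem: the uniform bound makes $\{C_{\vec{n}+\vec{e}_r}(Nx)/(NC_{\vec{n}}(Nx))\}$ a normal family on $\mathbb{C}\setminus[0,\infty)$, so it suffices to show that every locally uniform subsequential limit satisfies the quadratic with the $x\to\infty$ branch; uniqueness of that solution then upgrades subsequential convergence to convergence. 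The care needed is precisely in matching, along a common subsequence, the limit of the downshifted ratio with $1/g_r$, which rests on the shift invariance of the limiting regime under $\vec{n}\mapsto\vec{n}\pm\vec{e}_r$.
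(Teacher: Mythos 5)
Your overall strategy coincides with the paper's: divide the nearest-neighbour recurrence by $NC_{\vec{n}}(Nx)$, observe that the terms $n_ja_j/N^2$ for $j<r$ vanish while $a_{\vec{n},r}/N^2 = n_ra_r/N \to a_rq_rt$ survives, use the bound \eqref{ratiobound} and Montel's theorem to extract locally uniformly convergent subsequences, close the relation into the quadratic $g_r^2-(x-a_r-t)g_r+a_rq_rt=0$, pick the branch with $g_r(x)\sim x$, and conclude by uniqueness of the subsequential limit. All of that is right, and you have correctly located where the difficulty sits.

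But that difficulty is not disposed of by appealing to ``shift invariance of the limiting regime under $\vec{n}\mapsto\vec{n}\pm\vec{e}_r$.'' What you need is that, along the chosen subsequence, the limit of the downshifted ratio $P_{\vec{n}-\vec{e}_r,N}/P_{\vec{n},N}$ equals the reciprocal of the limit of the upshifted ratio $P_{\vec{n}+\vec{e}_r,N}/P_{\vec{n},N}$. The reciprocal of the upshifted ratio is $P_{\vec{n},N}/P_{\vec{n}+\vec{e}_r,N}$, which is the downshifted ratio based at the multi-index $\vec{n}+\vec{e}_r$ rather than at $\vec{n}$; subsequential limits obtained from a normal family carry no a priori equicontinuity in the discrete parameter $\vec{n}$, so the fact that $\vec{n}$ and $\vec{n}+\vec{e}_r$ lie in the same asymptotic regime does not by itself force the two limits to agree. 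The paper devotes a separate technical lemma (Lemma \ref{lem}) to exactly this point: subtracting the recurrence at $\vec{n}-\vec{e}_\ell$ from the recurrence at $\vec{n}$ yields a recursive inequality of the form $D_{\vec{n},\ell}\le C_1/(N\delta^3)+(C_2a_r/\delta^2)\,D_{\vec{n}-\vec{e}_r,\ell}$ for the difference of the two ratios, which is iterated $n_r$ times; the contraction factor $C_2a_r/\delta^2$ is made smaller than $1$ by first working on a compact set $K'$ far from $[0,\infty)$, and Vitali's theorem then transports the resulting convergence $D_{\vec{n},\ell}\to 0$ to arbitrary compacts in $\mathbb{C}\setminus[0,\infty)$. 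Without this (or an equivalent) argument your derivation of \eqref{gr} --- and hence of \eqref{gnotr}, which depends on $g_r$ --- is incomplete.
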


\begin{proof}
We still use the notation $P_{\vec{n},N}(x) = C_{\vec{n}}(Nx)/N^{|\vec{n}|}$, but now keep in mind that $C_{\vec{n}}$ depends
on the $r$ parameters $(a_1,\ldots,a_{r-1},Na_r)$ so that the parameter $N$ appears not only in the scaling of the variable $(Nx)$ 
but also in the last parameter $(Na_r)$. The recurrence relation \eqref{Char:recur}, after dividing by $C_{\vec{n}}(Nx)$ gives for
$x \in K$, where $K$ is a compact set in $\mathbb{C} \setminus [0,\infty)$,
\[   x = \frac{P_{\vec{n}+\vec{e}_k,N}(x)}{P_{\vec{n},N}(x)} + \frac{a_k+|\vec{n}|}{N} 
 + \sum_{j=1}^{r-1} \frac{n_ja_j}{N^2} \frac{P_{\vec{n}-\vec{e}_j,N}(x)}{P_{\vec{n},N}(x)} 
 + \frac{n_ra_r}{N} \frac{P_{\vec{n}-\vec{e}_r,N}(x)}{P_{\vec{n},N}(x)}  \]
when $1 \leq k \leq r-1$, and for $k=r$ we have
\[   x = \frac{P_{\vec{n}+\vec{e}_r,N}(x)}{P_{\vec{n},N}(x)} + \frac{Na_r+|\vec{n}|}{N} 
 + \sum_{j=1}^{r-1} \frac{n_ja_j}{N^2} \frac{P_{\vec{n}-\vec{e}_j,N}(x)}{P_{\vec{n},N}(x)} 
 + \frac{n_ra_r}{N} \frac{P_{\vec{n}-\vec{e}_r,N}(x)}{P_{\vec{n},N}(x)}.  \]
If we use \eqref{ratiobound}, then for $1 \leq k \leq r-1$
\[  \left| \frac{P_{\vec{n}+\vec{e}_k,N}(x)}{P_{\vec{n},N}(x)} -x + \frac{a_k+|\vec{n}|}{N} + 
   \frac{n_ra_r}{N} \frac{P_{\vec{n}-\vec{e}_r,N}(x)}{P_{\vec{n},N}(x)} \right| \leq \frac{1}{\delta N^2} \sum_{j=1}^{r-1} n_ja_j  \]
and for $k=r$
\[ \left| \frac{P_{\vec{n}+\vec{e}_r,N}(x)}{P_{\vec{n},N}(x)} -x + \frac{Na_r+|\vec{n}|}{N} + 
   \frac{n_ra_r}{N} \frac{P_{\vec{n}-\vec{e}_r,N}(x)}{P_{\vec{n},N}(x)} \right| \leq \frac{1}{\delta N^2} \sum_{j=1}^{r-1} n_ja_j  \]
so that
\[   \lim_{n \to \infty, n/N \to t}  \left| \frac{P_{\vec{n}+\vec{e}_k,N}(x)}{P_{\vec{n},N}(x)} -x + t + 
   a_rq_rt \frac{P_{\vec{n}-\vec{e}_r,N}(x)}{P_{\vec{n},N}(x)} \right| = 0, \qquad 1 \leq k \leq r-1, \]
and
\[   \lim_{n \to \infty, n/N \to t}  \left| \frac{P_{\vec{n}+\vec{e}_r,N}(x)}{P_{\vec{n},N}(x)} -x + a_r + t + 
   a_rq_rt \frac{P_{\vec{n}-\vec{e}_r,N}(x)}{P_{\vec{n},N}(x)} \right| = 0, \]
uniformly on $K$. The bound \eqref{ratiobound} implies that $\{P_{\vec{n}-\vec{e}_j,N}(x)/P_{\vec{n},N}(x): n,N \in \mathbb{N}\}$ is a normal family on every compact
subset of $\mathbb{C} \setminus [0,\infty)$, hence there is a subsequence which converges uniformly on $K$:
\[  \lim_{n_i \to \infty, n_i/N_i \to t} \frac{P_{\vec{n}_i-\vec{e}_j,N_i}(x)}{P_{\vec{n}_i,N_i}(x)} = h_j(x), \]
and, by taking further subsequences, this convergence holds for every $j$ for which $1 \leq j \leq r$. 
With our previous estimates, this gives
\begin{equation}  \label{ratio1}
  \lim_{n_i \to \infty, n_i/N_i \to t} \frac{P_{\vec{n}_i+\vec{e}_j,N_i}(x)}{P_{\vec{n}_i,N_i}(x)} = x-t-a_rq_rt h_r(x), \qquad 1 \leq j \leq r-1, 
\end{equation}
and
\begin{equation}  \label{ratio2}
  \lim_{n_i \to \infty, n_i/N_i \to t} \frac{P_{\vec{n}_i+\vec{e}_r,N_i}(x)}{P_{\vec{n}_i,N_i}(x)} = x-a_r-t-a_rq_rt h_r(x). 
\end{equation}
A technical estimation (see Lemma \ref{lem} at the end of this section) implies that
\[   \lim_{n \to \infty, n/N \to t} \left| \frac{P_{\vec{n},N}(x)}{P_{\vec{n}+\vec{e}_j,N}(x)} -
     \frac{P_{\vec{n}-\vec{e}_j,N}(x)}{P_{\vec{n},N}(x)} \right| = 0, \qquad 1 \leq j \leq r, \]
uniformly on $K$, hence \eqref{ratio2} gives
\[   \frac{1}{h_r(x)} = x-a_r-t - a_r q_rt h_r(x). \]
If we put $g_r(x) = 1/h_r(x)$, then this gives a quadratic equation for $g_r(x)$, with solutions
\[ \frac{x-a_r-t \pm \sqrt{(x-a_r-t)^2-4a_rq_r t}}{2}.  \]
Since $h_r(x) = 1/x + \mathcal{O}(1/x^2)$, we need to choose the solution with the positive sign for $g_r(x)$.
This limit is independent of the subsequence that we selected, hence every convergent subsequence has the same limit, which
implies that the full sequence converges to this limit. This gives \eqref{gr}, and by using \eqref{ratio1} we easily find \eqref{gnotr}.

\end{proof}

The limit function $g_r(x)$ is the solution of a quadratic equation. In general, if $k \leq r$ of the parameters grow linearly with $N$, then
the limit function is expected to be the solution, which grows as $x$ when $x \to \infty$, of an algebraic equation of degree $k+1$.

For the asymptotic behavior of the zeros we have

\begin{theorem} \label{thm4}
Suppose $n_j = \lfloor q_jn\rfloor$, with $0 < q_j < 1$ and $\sum_{j=1}^r q_j = 1$, so that $|\vec{n}|/n \to 1$ if $n \to \infty$.
Consider Poisson distributions with parameters $(a_1,a_2,\ldots,a_{r-1},Na_r)$, i.e., the last parameter grows linearly with $N$.
Then for $t>0$ one has
\[   \lim_{n,N \to \infty,\ n/N \to t} \frac{1}{|\vec{n}|} \sum_{j=1}^{|\vec{n}|} f(x_{\vec{n},j}/N)
     = \frac{1}{t} \int_0^{(1-q_r)t} f(x)\, dx + q_r \int_{\alpha_t}^{\beta_t} v(x)f(x)\, dx  \]
for every bounded continuous function $f$ on $[0,\infty)$, where $(1-q_r)t \leq \alpha_t < \beta_t$ and $v$ is
a probability density on $[\alpha_t,\beta_t]$. 
\end{theorem}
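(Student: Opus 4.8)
The plan is to follow the Stieltjes-transform / Grommer--Hamburger strategy of the proof of Theorem \ref{Thm2}, the only difference being that the ratio asymptotics in the $r$-th direction are now governed by the algebraic function $g_r$ of Theorem \ref{thm3} instead of a linear one. As before it suffices to show that $\frac{1}{|\vec n|}\frac{P_{\vec n,N}'(x)}{P_{\vec n,N}(x)}$ converges, uniformly on compact subsets $K$ of $\mathbb{C}\setminus[0,\infty)$, to the Stieltjes transform of $\frac1t\mathbf{1}_{[0,(1-q_r)t]}\,dx+q_r v\,dx$; Grommer--Hamburger then gives \eqref{zerodist}. I would start from the exact decomposition \eqref{r}, which splits the logarithmic derivative into the contribution of $P_{\vec n-n_r\vec e_r,N}$ and a telescoping sum over the $r$-th index.

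For the first contribution I would observe, exactly as in \eqref{r-1}, that $P_{\vec n-n_r\vec e_r,N}$ is a multiple orthogonal polynomial for the $r-1$ \emph{ordinary} Poisson weights $(a_1,\dots,a_{r-1})$ (the last parameter $Na_r$ has disappeared together with the index $n_r$), so Theorem \ref{Thm2} for $r-1$ measures applies verbatim and yields $\frac{1}{(1-q_r)t}\int_0^{(1-q_r)t}\frac{dy}{x-y}$, the prefactor $(|\vec n|-n_r)/|\vec n|\to 1-q_r$ producing the uniform part on $[0,(1-q_r)t]$. The heart of the matter is the telescoping sum: writing $k=\lfloor n_r s\rfloor$ turns it into $\int_0^1 \big(\tfrac{P_{\vec m+\vec e_r,N}}{P_{\vec m,N}}\big)'\big/\big(\tfrac{P_{\vec m+\vec e_r,N}}{P_{\vec m,N}}\big)\,ds$ with $\vec m=\vec n-(k+1)\vec e_r$. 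Here $|\vec m|/N\to(1-q_rs)t$ and $m_r/N\to q_r(1-s)t$, so Theorem \ref{thm3} (whose proof only uses the recurrence and the bound \eqref{ratiobound}, and hence applies along the sequence $\vec m$ with these effective values of $t$ and of the scaled last degree in place of $q_rt$) gives pointwise convergence of the integrand to $\frac{d}{dx}\log g_r^{(s)}(x)$, where
\[
 g_r^{(s)}(x)=\frac{x-a_r-(1-q_rs)t+\sqrt{D(x,s)}}{2},\qquad D(x,s)=\bigl(x-a_r-(1-q_rs)t\bigr)^2-4a_rq_r(1-s)t .
\]

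A short computation gives the pleasant identity $\frac{d}{dx}\log g_r^{(s)}(x)=1/\sqrt{D(x,s)}$, so after multiplying by $n_r/|\vec n|\to q_r$ the telescoping sum contributes $q_r\int_0^1 ds/\sqrt{D(x,s)}$ to the limit. I would justify the interchange of limit and integration by dominated convergence: since $P_{\vec m+\vec e_r,N}/P_{\vec m,N}\to g_r^{(s)}$ uniformly on $K$ and $g_r^{(s)}$ is bounded and bounded away from $0$ on $K$, uniformly in $s\in[0,1]$ (the degenerate endpoint $s=1$ merely gives the harmless simple factor $1/(x-a_r-(1-q_r)t)$), Montel and Vitali make the logarithmic derivatives uniformly bounded on $K$.

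It then remains to identify $q_r\int_0^1 ds/\sqrt{D(x,s)}$ as the Stieltjes transform of an absolutely continuous measure. Because the full limit is a pointwise limit of Stieltjes transforms $\int d\nu_{n,N}/(x-y)$ of probability measures whose supports stay in a fixed bounded interval, Grommer--Hamburger guarantees that $\nu_t$ is a probability measure; as its first piece is the uniform measure of mass $1-q_r$ on $[0,(1-q_r)t]$, the second piece is automatically a positive measure of mass $q_r$, i.e. $q_rv\,dx$ with $\int v=1$ (the mass $q_r$ being also visible from $1/\sqrt{D}\sim 1/x$). What is left is to locate its support and density. The branch points of $g_r^{(s)}$ are the roots of $D(\cdot,s)$, namely $x_\pm(s)=(1-q_r)t+\bigl(\sqrt{a_r}\pm\sqrt{q_r(1-s)t}\bigr)^2$, and the support is $\bigcup_{s\in[0,1]}[x_-(s),x_+(s)]$; maximising and minimising over $s$ gives $\beta_t=(1-q_r)t+\bigl(\sqrt{a_r}+\sqrt{q_rt}\bigr)^2$ and $\alpha_t=(1-q_r)t+\bigl(\max\{0,\sqrt{a_r}-\sqrt{q_rt}\}\bigr)^2\ge(1-q_r)t$, so the two parts of the spectrum are adjacent when $a_r\le q_rt$ and separated by a gap when $a_r>q_rt$. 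The density follows by Stieltjes inversion, $v(x)=-\tfrac1{\pi q_r}\operatorname{Im}\big(q_r\int_0^1 ds/\sqrt{D(x+i0,s)}\big)$, computed by restricting the $s$-integral to those $s$ for which $x\in[x_-(s),x_+(s)]$. The main obstacle I anticipate is precisely this last, computational, stage — evaluating the $s$-integral and its boundary values cleanly enough to exhibit $v$ explicitly and to confirm $[\alpha_t,\beta_t]$ — together with the uniform-in-$s$ control near $s=1$ needed to legitimise the dominated-convergence step.
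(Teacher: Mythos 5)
Your proposal is correct and follows essentially the same route as the paper: the decomposition \eqref{r}, the use of Theorem \ref{Thm2} for the $r-1$ fixed parameters giving the uniform piece on $[0,(1-q_r)t]$, the conversion of the telescoping sum into $\int_0^1 g_r(x,s)'/g_r(x,s)\,ds$ with $g_r(x,s)'/g_r(x,s)=\bigl[(x-a_r-(1-q_rs)t)^2-4a_rq_r(1-s)t\bigr]^{-1/2}$, and the same endpoints $\alpha_t,\beta_t$ with the same dichotomy between $a_r\geq q_rt$ and $a_r<q_rt$. The only divergence is the final identification of $v$: the paper represents each $1/\sqrt{D(x,s)}$ as the Stieltjes transform of an arcsine law on $[\alpha(s),\beta(s)]$ and interchanges the order of integration to compute $v$ explicitly, whereas you propose Stieltjes inversion of $\int_0^1 ds/\sqrt{D(x+i0,s)}$ --- the same computation in different clothing.
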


\begin{proof}
We can start from equation \eqref{r}:
\[      \frac{P_{\vec{n},N}'(x)}{P_{\vec{n},N}(x)} =
      \frac{P_{\vec{n}-n_r\vec{e}_r,N}'(x)}{P_{\vec{n}-n_r\vec{e}_r,N}(x)}
       + \sum_{k=0}^{n_r-1}   \left( \frac{P_{\vec{n}-k\vec{e}_r,N}(x)}{P_{\vec{n}-(k+1)\vec{e}_r,N}(x)} \right)' 
          \Big/  \left(\frac{P_{\vec{n}-k\vec{e}_r,N}(x)}{P_{\vec{n}-(k+1)\vec{e}_r,N}(x)} \right). \]
The multiple orthogonal polynomial $P_{\vec{n}-n_r\vec{e}_r}$ is in fact the multiple Charlier polynomial with the $r-1$
parameters $(a_1,\ldots,a_{r-1})$, which do not depend on $N$. Hence we can use Theorem \ref{Thm2} which gives \eqref{r-1}.
We write the sum as an integral, as we did in the proof of Theorem \ref{Thm2}, but now we use Theorem \ref{thm3} to find
\begin{equation*}  
 \lim_{n,N \to \infty, n/N \to t}  
  \frac{1}{n_r} \sum_{k=0}^{n_r-1}   \left( \frac{P_{\vec{n}-k\vec{e}_r,N}(x)}{P_{\vec{n}-(k+1)\vec{e}_r,N}(x)} \right)' 
          \Big/  \left(\frac{P_{\vec{n}-k\vec{e}_r,N}(x)}{P_{\vec{n}-(k+1)\vec{e}_r,N}(x)} \right)  
  = \int_0^1 \frac{g_r(x,s)'}{g_r(x,s)}\, ds 
\end{equation*}
where 
\[  g_r(x,s) = \frac{x-a_r-(1-q_rs)t + \sqrt{(x-a_r-(1-q_rs)t)^2-4a_rq_r(1-s)t}}{2}  \]
and the prime is the derivative $d/dx$. The $g_r(x,s)$ is obtained from Theorem \ref{thm3} after the substitutions
\[   q_j \to \frac{q_j}{1-q_rs}, \qquad 1 \leq j \leq r-1, \quad q_r \to \frac{(1-s)q_r}{1-q_rs}, \]
so that $\sum_{j=1}^r q_j = 1$, 
\[   n_r \to  n_r - \lfloor n_r s \rfloor, \quad n \to n(1-q_rs), \quad t \to (1-q_rs)n.  \]
Observe that
\[    \frac{g_r(x,s)'}{g_r(x,s)} = \frac{1}{\sqrt{(x-a_r-(1-q_rs)t)^2-4a_rq_r(1-s)t}}  \]
and if we use the well known Stieltjes transform
\[   \frac{1}{\sqrt{x^2-1}} = \frac{1}{\pi} \int_{-1}^1 \frac{1}{x-y} \frac{dy}{\sqrt{1-y^2}}, \qquad x \in \mathbb{C} \setminus [-1,1], \]
then one finds
\[    \frac{g_r(x,s)'}{g_r(x,s)} = \frac{1}{\pi} \int_{\alpha(s)}^{\beta(s)}
    \frac{1}{x-y} \frac{dy}{\sqrt{4a_rq_r(1-s)t - (y-a_r-(1-q_rs)t)^2}},  \]
where 
\[   \alpha(s) = a_r+(1-q_rs)t - 2\sqrt{a_rq_r(1-s)t}, \quad \beta(s) = a_r+(1-q_rs)t + 2\sqrt{a_rq_r(1-s)t}.  \]
In order to write
\[ \int_0^1 \frac{g_r(x,s)'}{g_r(x,s)}\, ds  \]
as a Stieltjes transform, we need to change the order of integration in
\begin{equation}  \label{doubleint}
      \int_0^1 \frac{1}{\pi} \int_{\alpha(s)}^{\beta(s)}
    \frac{1}{x-y} \frac{dy}{\sqrt{4a_rq_r(1-s)t - (y-a_r-(1-q_rs)t)^2}} \, ds. 
\end{equation}
Observe that
\[  \alpha(0)=a_r+t-2\sqrt{a_rq_rt}, \quad \beta(0) = a_r+t+2\sqrt{a_rq_rt}, \quad \alpha(1)=\beta(1)=a_r+(1-q_r)t,  \]
and that the function $\beta$ is monotonically decreasing for $s \in [0,1]$.
We need to distinguish between two cases.
\begin{description}
  \item[Case 1:] $a_r\geq q_rt$. In this case the function $\alpha$ is monotonically increasing for $s \in [0,1]$, see Figure \ref{case1}.
\begin{figure}[ht]
\centering
\rotatebox{270}{\resizebox{3in}{!}{\includegraphics{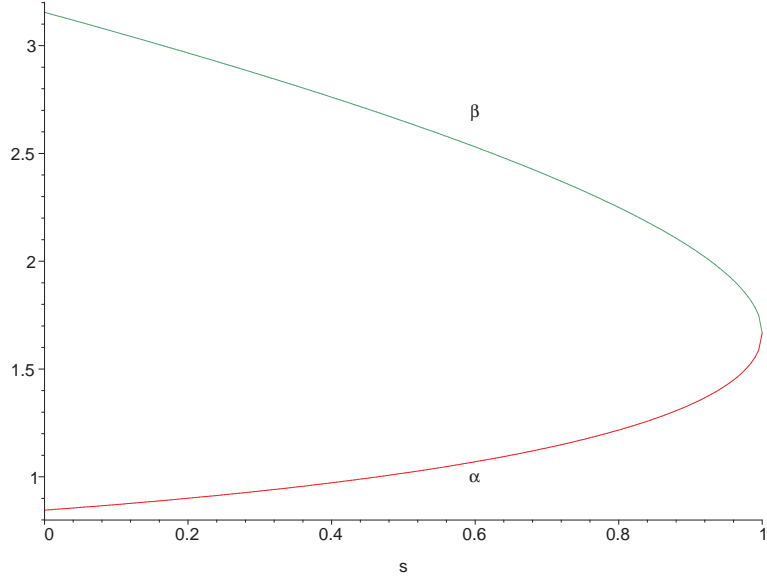}}}  
\caption{The functions $\alpha$ and $\beta$ for case 1}
\label{case1}
\end{figure}

If we define $\alpha_t=\alpha(0)$ and $\beta_t = \beta(0)$  then 
\[ \begin{cases}
   \alpha^{-1}(y) = \frac{-y-a_r+t + 2\sqrt{a_r(y-(1-q_r)t)}}{q_rt}, & \alpha_t \leq y \leq a_r+(1-q_r)t \\
   \beta^{-1}(y) =    \frac{-y-a_r+t + 2\sqrt{a_r(y-(1-q_r)t)}}{q_rt}, & a_r + (1-q_r)t \leq y \leq  \beta_t
   \end{cases}  \]
so that interchanging the order of integration in \eqref{doubleint} gives
\[   \frac{1}{\pi} \int_{\alpha_t}^{\beta_t} \frac{dy}{x-y} \int_0^{\frac{-y-a_r+t+2\sqrt{a_r(y-(1-q_r)t)}}{q_rt}}
      \frac{ds}{\sqrt{4a_rq_r(1-s)t - (y-a_r-(1-q_rs)t)^2}}.  \]
When we change the variable $s$ to a new variable $u$ by 
\[  s=\frac{-y-a_r+t+2u\sqrt{a_r(y-(1-q_r)t)}}{q_rt}, \]
then the integral simplies to
\[   \frac{1}{\pi q_rt} \int_{\alpha_t}^{\beta_t} \frac{dy}{x-y} \int^1_{\frac{y+a_r-t}{2\sqrt{a_r(y-(1-q_r)t)}}} \frac{du}{\sqrt{1-u^2}}. \]
This gives the weight function
\[   v(y) = \frac{1}{\pi q_rt} \int^1_{\frac{y+a_r-t}{2\sqrt{a_r(y-(1-q_r)t)}}} \frac{du}{\sqrt{1-u^2}}, \qquad
\alpha_t \leq y \leq \beta_t. \]
 An easy exercise gives that $(1-q_r)t \leq \alpha_t < \beta_t$. 
  \item[Case 2:] $a_r < q_rt$. In this case $\alpha$ has a global minimum on $]0,1[$ at $s=1-a_r/q_rt$, and the minimum
  is $(1-q_r)t$,  see   Figure \ref{case2}.
\begin{figure}[ht]
\centering
\rotatebox{270}{\resizebox{3in}{!}{\includegraphics{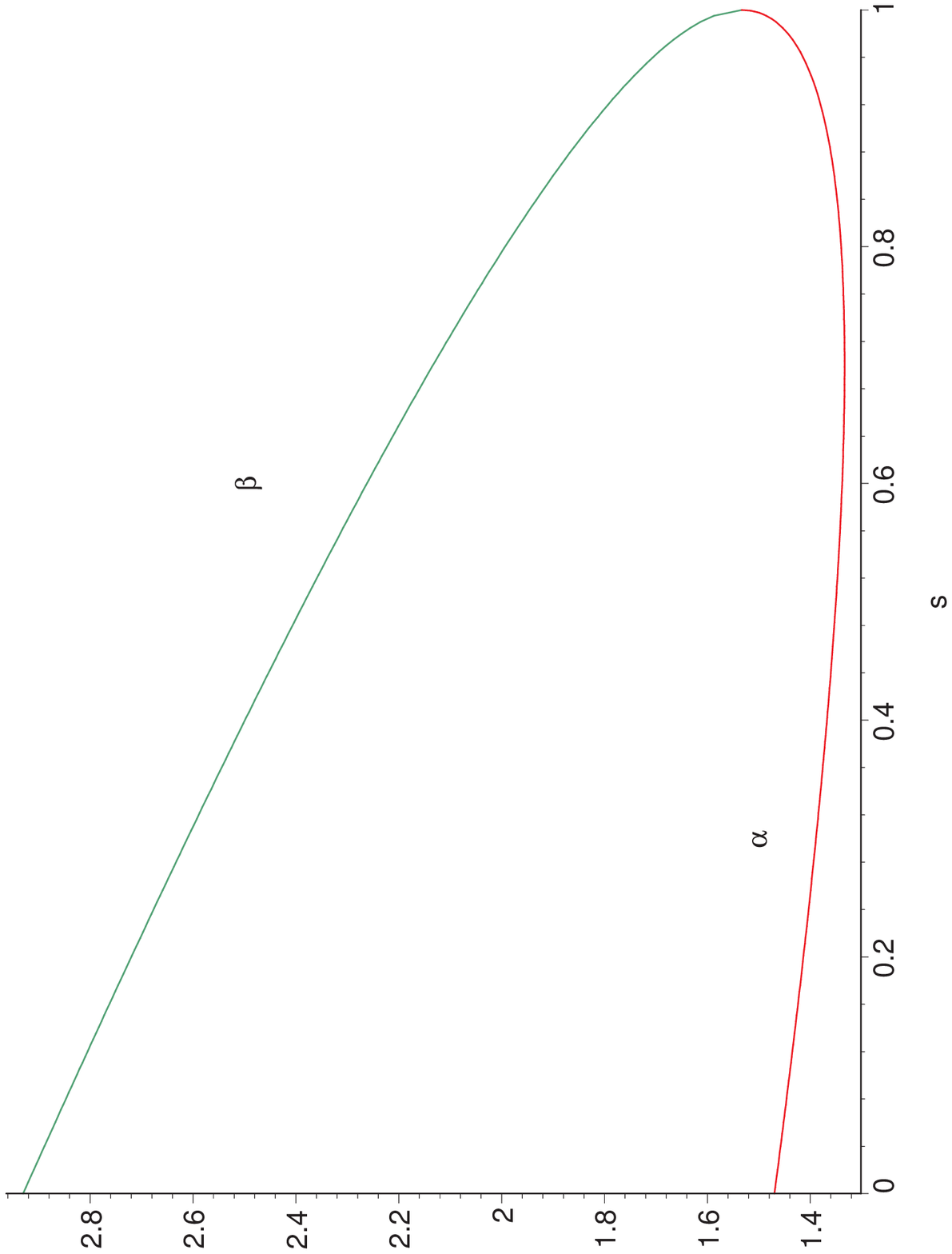}}}  
\caption{The functions $\alpha$ and $\beta$ for case 2}
\label{case2}
\end{figure}

Interchanging the order of the integrals in \eqref{doubleint} now gives two pieces
\begin{multline*}   
 \frac{1}{\pi} \int_{(1-q_r)t}^{\alpha(0)} \frac{dy}{x-y} 
    \int_{\frac{-y-a_r+t-2\sqrt{a_r(y-(1-q_r)t)}}{q_rt}}^{\frac{-y-a_r+t+2\sqrt{a_r(y-(1-q_r)t)}}{q_rt}}
      \frac{ds}{\sqrt{4a_rq_r(1-s)t - (y-a_r-(1-q_rs)t)^2}} \\
  + \frac{1}{\pi} \int_{\alpha(0)}^{\beta(0)} \frac{dy}{x-y} \int_0^{\frac{-y-a_r+t+2\sqrt{a_r(y-(1-q_r)t)}}{q_rt}}
      \frac{ds}{\sqrt{4a_rq_r(1-s)t - (y-a_r-(1-q_rs)t)^2}}.
\end{multline*}
The change of variable $s \to u$ with
\[  s=\frac{-y-a_r+t+2u\sqrt{a_r(y-(1-q_r)t)}}{q_rt} \]
now gives
\begin{equation*}   
 \frac{1}{\pi q_rt} \int_{(1-q_r)t}^{\alpha(0)} \frac{dy}{x-y} 
    \int_{-1}^1  \frac{du}{\sqrt{1-u^2}} 
  + \frac{1}{\pi q_rt} \int_{\alpha(0)}^{\beta(0)} \frac{dy}{x-y} 
   \int^1_{\frac{y+a_r-t}{2\sqrt{a_r(y-(1-q_r)t)}}}
      \frac{du}{\sqrt{1-u^2}}.
\end{equation*}
So if we now define $\alpha_t = (1-q_r)t$ and $\beta_t = \beta(0)$, then obviously $(1-q_r)t=\alpha_t < \beta_t$ and
the weight function becomes
\[  v(y) =  \frac{1}{q_rt}, \qquad (1-q_r)t \leq y \leq \alpha(0) \]
and
\[ v(y) = \frac{1}{\pi q_rt} \int^1_{\frac{y+a_r-t}{2\sqrt{a_r(y-(1-q_r)t)}}}
      \frac{du}{\sqrt{1-u^2}}, \qquad \alpha(0) \leq y \leq \beta(0).  \]
\end{description}
So in both cases we get
\begin{equation*}  
 \lim_{n,N \to \infty, n/N \to t}  
  \frac{1}{n_r} \sum_{k=0}^{n_r-1}   \left( \frac{P_{\vec{n}-k\vec{e}_r,N}(x)}{P_{\vec{n}-(k+1)\vec{e}_r,N}(x)} \right)' 
          \Big/  \left(\frac{P_{\vec{n}-k\vec{e}_r,N}(x)}{P_{\vec{n}-(k+1)\vec{e}_r,N}(x)} \right)  
  = \int_{\alpha_t}^{\beta_t} \frac{v(y)}{x-y} \, dy, 
\end{equation*}
and combining this with \eqref{r-1} gives
\begin{equation*}  
    \lim_{n,N \to \infty, n/N \to t} \frac{1}{|\vec{n}|} \frac{P_{\vec{n},N}'(x)}{P_{\vec{n},N}(x)}
    = \frac{1}{t} \int_0^{(1-q_r)t} \frac{1}{x-y} \, dy + q_r \int_{\alpha_t}^{\beta_t} \frac{v(y)}{x-y}\, dy,  
\end{equation*}
which gives the desired result in view of the Grommer-Hamburger theorem \cite{GH}.
\end{proof}

The first portion  of $(1-q_r)n$ of the zeros of $C_{\vec{n}}(Nx)$ are uniformly distributed on $[0,(1-q_r)t]$ and hence
the constraint that `between two positive integers there can be at most one zero' is in action and the zeros are forced
to approach the first $(1-q_r)n$ integers in $\mathbb{N}$. If $a_r \geq q_rt$ (case 1) then the last portion of $q_rn$ 
of the zeros have a different distribution on an interval $[\alpha_t,\beta_t]=[a_r+t-2\sqrt{a_rq_rt},a_r+t+2\sqrt{a_rq_rt}]$ to the right of
the interval $[0,(1-q_r)t]$ where the other zeros accumulate. This means that those last $q_rn$ zeros are less dense distributed and some of the intervals between two integers may be free of zeros.
If $a_r < q_rt$ (case 2) then some of the $q_rn$ last zeros are still uniformly distributed on $[(1-q_r)t,a_r+t-2\sqrt{a_rq_rt}]$ but the remaining zeros
are less dense distributed on $[a_r+t-2\sqrt{a_rq_rt},a_r+t+2\sqrt{a_rq_rt}]$ and this interval now touches the interval where the zeros
are uniformly distributed.
In fact, a transition occurs when $a_r=q_rt$ in the sense that the (scaled) zeros have a zero distribution on two disjoint intervals
when $a_r > q_rt$ and the zero distribution is supported on one interval when $a_r < q_rt$. Moreover, since
\[  \int_z^1 \frac{du}{\sqrt{1-u^2}} \sim C\sqrt{1-z}, \qquad z \to 1-  \]
and for $a_r > q_rt$
\[   1- \frac{y+a_r-t}{2\sqrt{a_r(y-(1-q_r)t)}} \sim \begin{cases}
                         C_1 (y-\alpha(0)), & y\to \alpha(0)+ \\
                         C_2 (\beta(0)-y), & y \to \beta(0)-
                   \end{cases}  \] 
we see that the density $v$ near the endpoints $\alpha(0)$ and $\beta(0)$ tends to zero as $\sqrt{y-\alpha(0)}$ and $\sqrt{\beta(0)-y}$, respectively
(see Figure \ref{density}, picture on the left, for $a_r=1$, $q_r=1/10$ and $t=1$).
\begin{figure}[ht]
\centering
\rotatebox{270}{\resizebox{2in}{!}{\includegraphics{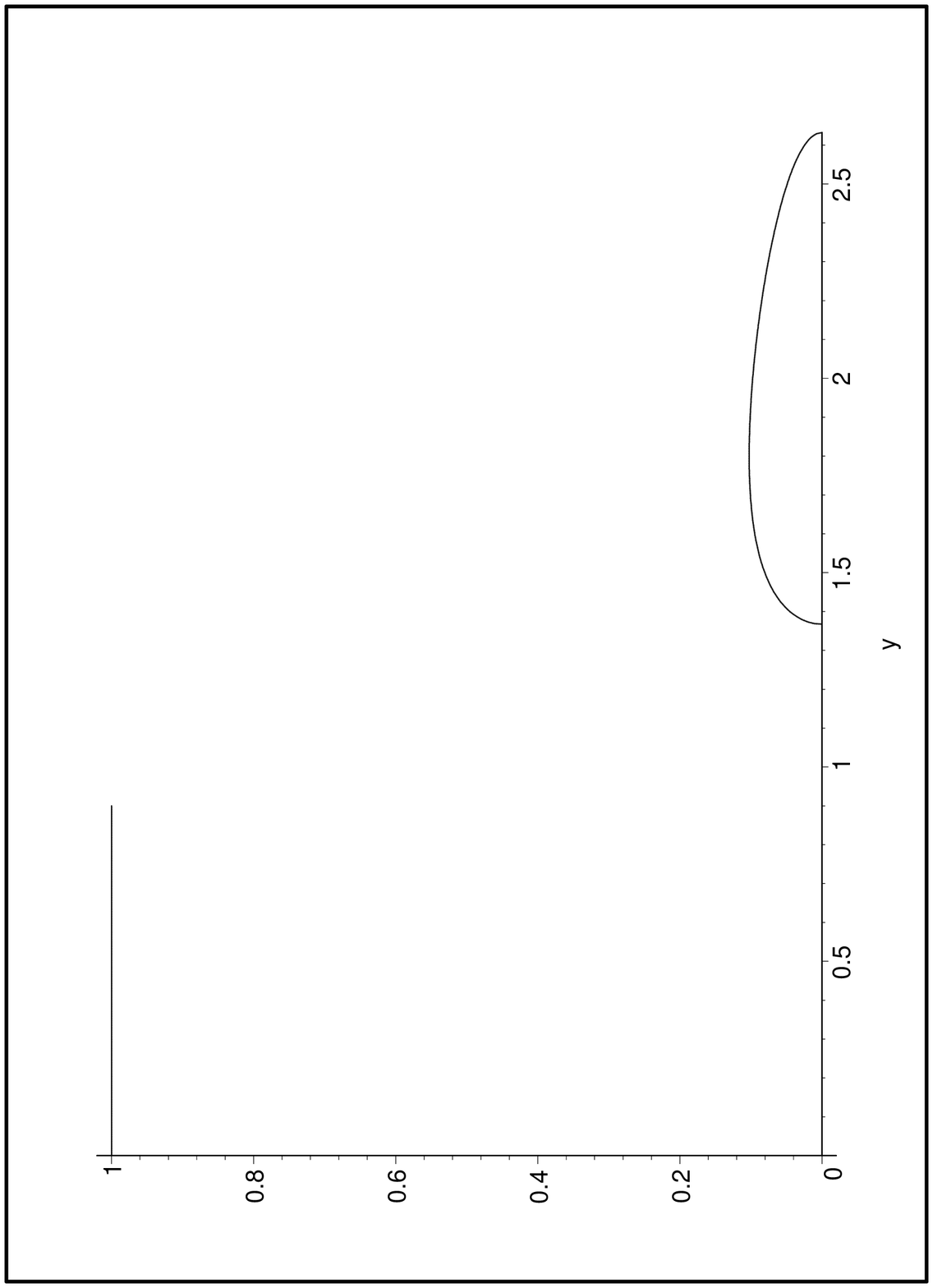}}}
\rotatebox{270}{\resizebox{2in}{!}{\includegraphics{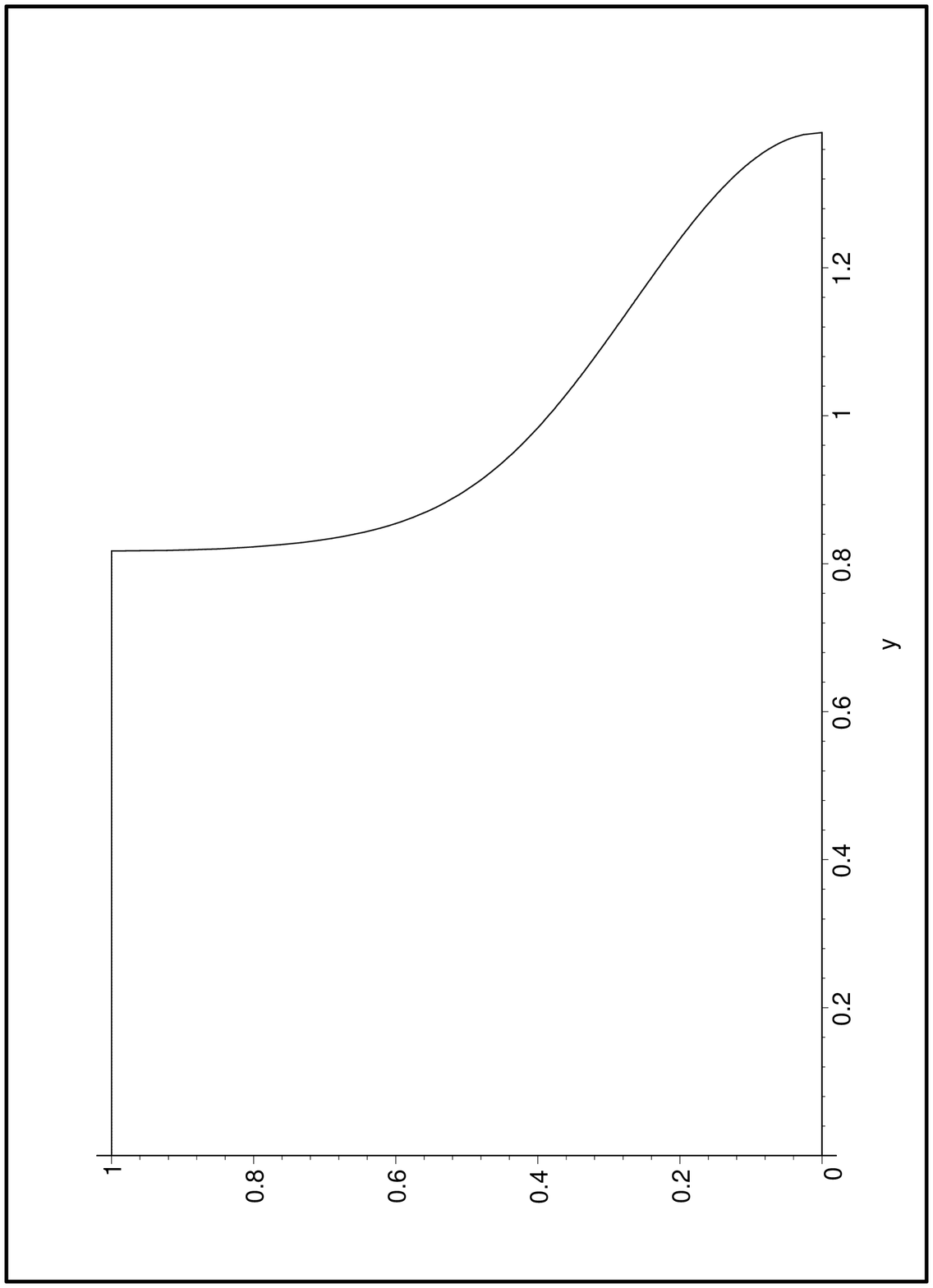}}}  
\caption{The density of the (scaled) zeros of multiple Charlier polynomials}
\label{density}
\end{figure}

For $a_r < q_rt$ we still have $v(y) \sim C \sqrt{\beta(0)-y}$ near the endpoint $\beta(0)$. The transition from uniform density to
non-uniform density occurs at $y = \alpha(0)$, but now 
\[  \frac{y+a_r-t}{2\sqrt{a_r(y-(1-q_r)t)}} \to -1 , \qquad y \to \alpha(0)+ \]
so that $v(y) \to 1/q_rt$ as $y \to \alpha(0)+$, and the density is continuous at the transition point $\alpha(0)$ (see Figure \ref{density},
picture on the right, for $a_r=1/10$, $q_r=1/5$ and $t=1$).

When $a_r=q_rt$ we have
\[  \frac{y+a_r-t}{2\sqrt{a_r(y-(1-q_r)t)}} \to 0 , \qquad y \to \alpha(0)+ \]
so that $v(y) \to 1/2q_rt$ as $y \to \alpha(0)+$, so that the density is not continuous at the transition point.

Such transitions also occur when $k < r$ of the parameters depend linearly on $N$. In that case the zeros of $C_{\vec{n}}(Nx)$
may accumulate on at most $k+1$ disjoint intervals. If all the parameters depend on $N$ (i.e., $k=r$) then the zeros accumulate
on at most $r$ disjoint intervals. The analysis for $k >1$ is more involved since this involves algebraic functions of order
$k+1$. 

One technical, but crucial, step in the proof of Theorem \ref{thm3} is the following.
\begin{lemma} \label{lem}
Let $P_{\vec{n},N}(x) = C_{\vec{n}}(Nx)/N^{|\vec{n}|}$, where $C_{\vec{n}}$ are the multiple Charlier polynomials with parameters
$(a_1,\ldots,a_{r-1},Na_r)$. Let $K$ be a compact set in $\mathbb{C}\setminus [0,\infty)$, then for every $k$ and $\ell$ with $1 \leq k,\ell \leq r$
one has, uniformly for $x \in K$
\[   \lim_{n \to \infty, n/N \to t} \left| \frac{P_{\vec{n},N}(x)}{P_{\vec{n}+\vec{e}_k,N}(x)} -
     \frac{P_{\vec{n}-\vec{e}_\ell,N}(x)}{P_{\vec{n}+\vec{e}_k-\vec{e}_\ell,N}(x)} \right| = 0. \]
\end{lemma}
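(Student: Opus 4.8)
The plan is to set $\vec{m}=\vec{n}+\vec{e}_k$ and abbreviate $S^{(k)}_{\vec{q}}=P_{\vec{q}-\vec{e}_k,N}/P_{\vec{q},N}$, so that the quantity to estimate is the ``Casoratian difference''
\[
\Delta_{\vec{m},\ell}(x):=S^{(k)}_{\vec{m}}-S^{(k)}_{\vec{m}-\vec{e}_\ell}
 =\frac{P_{\vec{m}-\vec{e}_k,N}(x)}{P_{\vec{m},N}(x)}-\frac{P_{\vec{m}-\vec{e}_k-\vec{e}_\ell,N}(x)}{P_{\vec{m}-\vec{e}_\ell,N}(x)} .
\]
Because all recurrence coefficients $a_{\vec{q},j}$ stay positive when the last parameter is $Na_r$, the interlacing behind \eqref{ratiobound} persists, so each of the two ratios is bounded by $1/\delta$ on $K$; in particular $\{\Delta_{\vec{m},\ell}\}$ is a uniformly bounded, hence normal, family of functions analytic on $\mathbb{C}\setminus[0,\infty)$, with $|\Delta_{\vec{m},\ell}|\le 2/\delta$ throughout.

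First I would extract an exact recursion for $\Delta_{\vec{m},\ell}$ from \eqref{Char:recur}. Writing $\Delta_{\vec{m},\ell}=(AD-BC)/(BD)$ with $A=P_{\vec{m}-\vec{e}_k,N}$, $B=P_{\vec{m},N}$, $C=P_{\vec{m}-\vec{e}_k-\vec{e}_\ell,N}$, $D=P_{\vec{m}-\vec{e}_\ell,N}$, I would use the rescaled recurrence in direction $\ell$ to express $B$ through $D$ and $A$ through $C$. In $AD-BC$ the leading $x\,CD$ contributions cancel; the diagonal coefficients $b_{\vec{m}-\vec{e}_\ell,\ell}$ and $b_{\vec{m}-\vec{e}_k-\vec{e}_\ell,\ell}$ differ by exactly $1$, producing a harmless $O(1/N)$ term, while the remaining part carries the coefficients $a_{\cdot,j}/N^2$. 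For $j<r$ these tend to $0$ since $a_{\cdot,j}=O(n)$, but the term $j=r$ survives, because the parameter $Na_r$ makes $a_{\cdot,r}/N^2=m_r a_r/N\to q_r t a_r$. The crucial point is that this surviving term, after dividing out, is again a Casoratian difference of the same shape, now in direction $r$ at the shifted level $\vec{m}-\vec{e}_\ell$. Carrying this out I expect to reach an estimate of the form
\[
\sup_{x\in K}\bigl|\Delta_{\vec{m},\ell}\bigr|\le \varepsilon_{\vec{m}}+\frac{q_r t a_r}{\delta^2}\,\sup_{x\in K}\bigl|\Delta_{\vec{m}-\vec{e}_\ell,\,r}\bigr|,
\]
where $\varepsilon_{\vec{m}}\to 0$ collects the $O(1/N)$ and $j<r$ contributions and I have used $|P_{\vec{m}-\vec{e}_\ell-\vec{e}_r,N}/P_{\vec{m},N}|\le 1/\delta^2$.

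Next I would iterate. After the first step the shift direction is $r$, so each further application only decrements the $r$-th coordinate and a genuine chain results,
\[
\sup_K|\Delta_{\vec{m},\ell}|\le\sum_{i=0}^{s-1}\Bigl(\tfrac{q_r t a_r}{\delta^2}\Bigr)^{i}\varepsilon^{(i)}+\Bigl(\tfrac{q_r t a_r}{\delta^2}\Bigr)^{s}\frac{2}{\delta},
\]
valid for any $s\le m_r$. Since the errors $\varepsilon^{(i)}$ remain uniformly $O(1/N)$ as long as the indices keep all coordinates of order $N$, choosing $s=s(N)\to\infty$ with $s=o(N)$ (for instance $s=\lfloor\sqrt{N}\rfloor$) sends both the sum and the tail to $0$, \emph{provided} $q_r t a_r/\delta^2<1$. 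That last condition need not hold on an arbitrary compact $K$, and this is the main obstacle. To circumvent it I would first prove $\Delta_{\vec{m},\ell}\to 0$ on a region far from $[0,\infty)$, where $\delta$ is large enough that $q_r t a_r/\delta^2<1$, and then invoke the Vitali/Montel theorem: the $\Delta_{\vec{m},\ell}$ are analytic and uniformly bounded on compacts of the connected domain $\mathbb{C}\setminus[0,\infty)$ and converge to $0$ on an open subset, hence converge to $0$ locally uniformly on all of $\mathbb{C}\setminus[0,\infty)$.

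The genuinely delicate steps are therefore the bookkeeping in the recursion, namely verifying that only the $j=r$ term survives and that it reassembles into a Casoratian of the same type in direction $r$, together with the fact that the recursive coefficient is not a priori a contraction, which is precisely what forces the two-stage argument: a contraction estimate on a far region followed by analytic continuation.
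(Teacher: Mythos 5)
Your proposal is correct and follows essentially the same route as the paper: a recursive inequality extracted from the nearest-neighbour recurrence in which only the $j=r$ term survives (with coefficient of order $a_r q_r t/\delta^2$) and reassembles into a Casoratian of the same shape, iterated to produce a geometric series plus an $O(1/N)$ error, first on a compact far from $[0,\infty)$ where the factor is a contraction, and then extended to all of $\mathbb{C}\setminus[0,\infty)$ by Vitali's theorem exactly as in the paper. The only (harmless) differences are organizational: the paper applies the recurrence in direction $k$ at the two levels $\vec{n}$ and $\vec{n}-\vec{e}_\ell$ and converts forward-ratio differences to reciprocal ones via the factor $\delta^2$, and it iterates the full $n_r$ times rather than $o(N)$ times.
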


\begin{proof}
From the recurrence relation, we have
\[   x = \frac{P_{\vec{n}+\vec{e}_k,N}(x)}{P_{\vec{n},N}(x)} + \frac{a_k+|\vec{n}|}{N} 
 + \sum_{j=1}^{r-1} \frac{n_ja_j}{N^2} \frac{P_{\vec{n}-\vec{e}_j,N}(x)}{P_{\vec{n},N}(x)} 
 + \frac{n_ra_r}{N} \frac{P_{\vec{n}-\vec{e}_r,N}(x)}{P_{\vec{n},N}(x)}  \]
when $1 \leq k \leq r-1$, and for $k=r$ we have
\[   x = \frac{P_{\vec{n}+\vec{e}_r,N}(x)}{P_{\vec{n},N}(x)} + \frac{Na_r+|\vec{n}|}{N} 
 + \sum_{j=1}^{r-1} \frac{n_ja_j}{N^2} \frac{P_{\vec{n}-\vec{e}_j,N}(x)}{P_{\vec{n},N}(x)} 
 + \frac{n_ra_r}{N} \frac{P_{\vec{n}-\vec{e}_r,N}(x)}{P_{\vec{n},N}(x)}.  \]
We will denote
\[   E_{\vec{n}}(x) = \sum_{j=1}^{r-1} \frac{n_ja_j}{N^2} \frac{P_{\vec{n}-\vec{e}_j,N}(x)}{P_{\vec{n},N}(x)}  \]
and the bound \eqref{ratiobound} then gives
\[   |E_{\vec{n}}(x)| \leq \frac{1}{\delta N^2} \sum_{j=1}^{r-1} n_j a_j \leq \frac{C|\vec{n}|}{\delta N^2}, \]
where $C>0$ is a constant (in fact on may take $\max_{1 \leq j \leq r-1} a_j$).
If we change $\vec{n}$ to $\vec{n}-\vec{e}_\ell$, then
\[   x = \frac{P_{\vec{n}+\vec{e}_k-\vec{e}_\ell,N}(x)}{P_{\vec{n}-\vec{e}_\ell,N}(x)} + \frac{a_k+|\vec{n}|-1}{N} 
 + E_{\vec{n}-\vec{e}_\ell}(x) 
 + \frac{(n_r-\delta_{r,\ell})a_r}{N} \frac{P_{\vec{n}-\vec{e}_r-\vec{e}_\ell,N}(x)}{P_{\vec{n}-\vec{e}_\ell,N}(x)}  \]
when $1 \leq k \leq r-1$, and for $k=r$ 
\[   x = \frac{P_{\vec{n}+\vec{e}_r-\vec{e}_\ell,N}(x)}{P_{\vec{n}-\vec{e}_\ell,N}(x)} + \frac{Na_r+|\vec{n}|-1}{N} 
 + E_{\vec{n}-\vec{e}_\ell}(x) + \frac{(n_r-\delta_{r,\ell})a_r}{N} \frac{P_{\vec{n}-\vec{e}_r-\vec{e}_\ell,N}(x)}{P_{\vec{n}-\vec{e}_\ell,N}(x)}.  \]
If we subtract the equations for $\vec{n}-\vec{e}_\ell$ from those with $\vec{n}$, then we find
\begin{multline*}   0 = \frac{P_{\vec{n}+\vec{e}_k,N}(x)}{P_{\vec{n},N}(x)} - \frac{P_{\vec{n}+\vec{e}_k-\vec{e}_\ell,N}(x)}{P_{\vec{n}-\vec{e}_\ell,N}(x)}
         + \frac{1}{N} + E_{\vec{n}}(x)- E_{\vec{n}-\vec{e}_\ell}(x) \\
       + \frac{n_ra_r}{N} \left( \frac{P_{\vec{n}-\vec{e}_r,N}(x)}{P_{\vec{n},N}(x)} -
     \frac{P_{\vec{n}-\vec{e}_r-\vec{e}_\ell,N}(x)}{P_{\vec{n}-\vec{e}_\ell,N}(x)} \right) + \frac{\delta_{r,\ell} a_r}{N}
        \frac{P_{\vec{n}-\vec{e}_r-\vec{e}_\ell,N}(x)}{P_{\vec{n}-\vec{e}_\ell,N}(x)}.  
\end{multline*}
We have
\[   |E_{\vec{n}}(x)- E_{\vec{n}-\vec{e}_\ell}(x)| \leq \frac{2C|\vec{n}|}{\delta N^2}  \]
and  we will take $|\vec{n}| \leq C_2 N$, therefore we have
\[    \left| \frac{P_{\vec{n}+\vec{e}_k,N}(x)}{P_{\vec{n},N}(x)} - \frac{P_{\vec{n}+\vec{e}_k-\vec{e}_\ell,N}(x)}{P_{\vec{n}-\vec{e}_\ell,N}(x)} \right| 
    \leq \frac{C_1}{N\delta} + C_2a_r \left|\frac{P_{\vec{n}-\vec{e}_r,N}(x)}{P_{\vec{n},N}(x)} -
     \frac{P_{\vec{n}-\vec{e}_r-\vec{e}_\ell,N}(x)}{P_{\vec{n}-\vec{e}_\ell,N}(x)} \right|, \]
where $C_1$ and $C_2$ are constants. If we use the bound \eqref{ratiobound}, then
\[     \left| \frac{P_{\vec{n}+\vec{e}_k,N}(x)}{P_{\vec{n},N}(x)} - \frac{P_{\vec{n}+\vec{e}_k-\vec{e}_\ell,N}(x)}{P_{\vec{n}-\vec{e}_\ell,N}(x)} \right| 
    \geq \delta^2 \left| \frac{P_{\vec{n},N}(x)}{P_{\vec{n}+\vec{e}_k,N}(x)} - \frac{P_{\vec{n}-\vec{e}_\ell,N}(x)}{P_{\vec{n}+\vec{e}_k-\vec{e}_\ell,N}(x)} \right|  \]
so that
\[   \left| \frac{P_{\vec{n},N}(x)}{P_{\vec{n}+\vec{e}_k,N}(x)} - \frac{P_{\vec{n}-\vec{e}_\ell,N}(x)}{P_{\vec{n}+\vec{e}_k-\vec{e}_\ell,N}(x)} \right|
   \leq \frac{C_1}{N\delta^3} + \frac{C_2a_r}{\delta^2} \left|\frac{P_{\vec{n}-\vec{e}_r,N}(x)}{P_{\vec{n},N}(x)} -
     \frac{P_{\vec{n}-\vec{e}_r-\vec{e}_\ell,N}(x)}{P_{\vec{n}-\vec{e}_\ell,N}(x)} \right|.  \]
If we  use the notation
\[     D_{\vec{n},k,\ell} = \left| \frac{P_{\vec{n},N}(x)}{P_{\vec{n}+\vec{e}_k,N}(x)} - \frac{P_{\vec{n}-\vec{e}_\ell,N}(x)}{P_{\vec{n}+\vec{e}_k-\vec{e}_\ell,N}(x)} \right|, \]
then this gives
\[   D_{\vec{n},k,\ell} \leq \frac{C_1}{N\delta^3} + \frac{C_2a_r}{\delta^2} D_{\vec{n}-\vec{e}_r,r,\ell}.  \]
Put
\[     D_{\vec{n},\ell} = \max_{1 \leq k \leq r} D_{\vec{n},k,\ell},   \]
then one has
\[     D_{\vec{n},\ell} \leq  \frac{C_1}{N\delta^3} + \frac{C_2a_r}{\delta^2} D_{\vec{n}-\vec{e}_r,\ell} . \]
Iterating this inequality gives
\[     D_{\vec{n},\ell} \leq \left( \frac{C_2a_r}{\delta^2} \right)^{n_r} D_{\vec{n}-n_r\vec{e}_r,\ell}
     + \frac{C_1}{N\delta^3} \sum_{j=0}^{n_r-1} \left( \frac{C_2a_r}{\delta^2} \right)^j .  \]
Now choose a compact $K'$ (with an accumulation point) far enough from $[0,\infty)$ so that $\delta$ is large and $C_2a_r/\delta^2 < 1$. 
Then for $x \in K'$
\[   D_{\vec{n},\ell} \leq \left( \frac{C_2a_r}{\delta^2} \right)^{n_r} D_{\vec{n}-n_r\vec{e}_r,\ell}
     + \frac{C_1}{N\delta^3} \frac{1}{1-C_2a_r/\delta^2}.  \]
The bound \eqref{ratiobound} gives $D_{\vec{n},\ell} \leq 2/\delta$, hence if we put
$\vec{n} = (\lfloor nq_1 \rfloor, \ldots, \lfloor nq_r \rfloor)$ and let $n,N \to \infty$ such that $n/N \to t >0$, then
\[     \lim_{n \to \infty, n/N \to t} D_{\vec{n},\ell} = 0 \]
uniformly for $x \in K'$. So we have convergence of $D_{\vec{n},\ell} \to 0$ uniformly on a set $K'$ with an accumulation point, but
then Vitali's theorem implies that $D_{\vec{n},\ell}$ converges to zero uniformly on every compact $K$ where a bound
\eqref{ratiobound} holds, hence for $K \subset \mathbb{C} \setminus [0,\infty)$. 
 \end{proof}

\section{Concluding remarks}
In this paper we have investigated the ratio asymptotic behavior of the multiple Charlier polynomials and from it we obtained the asymptotic 
distribution of the zeros, after proper rescaling. The next step is to find the asymptotic behavior of the polynomials $C_{\vec{n}}$
themselves: the strong asymptotic behavior or the uniform asymptotic behavior. As in the case of the usual Charlier polynomials, one will
need to look at different regions in the complex plane: away from the positive real line, on the oscillatory region where all the zeros are,
near the largest zero, near the origin, etc. One way to do this is to use the integral relation which can be obtained from the
multivariate generating function and to apply a steepest descent analysis (but for a multiple integral), as was done by Goh \cite{Goh} and Rui and Wong \cite{RW} for Charlier polynomials. Another way is to use the
Riemann-Hilbert problem (for $(r+1) \times (r+1)$ matrices) and the steepest descent method for oscillatory Riemann-Hilbert problems, as was done by Ou and Wong \cite{Ou} for Charlier polynomials.
One of the steps in that asymptotic analysis is to transform the Riemann-Hilbert problem to a normalized (at infinity) Riemann-Hilbert
problem, and this requires $g$-functions which are logarithmic potentials of the asymptotic zero distribution. Hence the results in Section 4 and
Section 5 (in particular Theorem \ref{thm4}) will be needed.

\bigskip

\begin{verbatim}
Walter Van Assche
Francois Ndayiragije
Department of Mathematics
Katholieke Universiteit Leuven
Celestijnenlaan 200B box 2400
BE-3001 Leuven
BELGIUM
walter@wis.kuleuven.be
francois.ndayiragije@wis.kuleuven.be
\end{verbatim}

\begin{thebibliography}{9}
\bibitem{ACV} J. Arves\'u, J. Coussement, W. Van Assche,
\textit{Some discrete multiple orthogonal polynomials},
J. Comput.\ Appl.\ Math.\ \textbf{153} (2003), 19--45.
\bibitem{BFPS} A. Borodin, P.L. Ferrari, M. Pr\"ahofer, T. Sasamoto,
\textit{Fluctuation properties of the TASEP with periodic initial configuration},
J. Statist.\ Phys.\ \textbf{129} (2007), 1055--1080.
\bibitem{BFS} A. Borodin, P.L. Ferrari, T. Sasamoto,
\textit{Two speed TASEP},
J. Statist.\ Phys.\ \textbf{137} (2009), 936--977.
\bibitem{Chihara} T.S. Chihara,
\textit{An Introduction to Orthogonal Polynomials},
Mathematics and its Applications \textbf{13}, 
Gordon and Breach, New York, 1978. 
\bibitem{Dun} T.M. Dunster, 
\textit{Uniform asymptotic expansions for Charlier polynomials}, 
J. Approx.\ Theory \textbf{112} (2001), 93-–133.
\bibitem{GH} J.S. Geronimo, T.P. Hill,
\textit{Necessary and sufficient condition that the limit of Stieltjes transforms is a Stieltjes transform},
J. Approx.\ Theory \textbf{121}, 54--60. 
\bibitem{Goh} W.M.Y. Goh, 
\textit{Plancherel-Rotach asymptotics for the Charlier polynomials}, 
Constr.\ Approx.\ \textbf{14} (1998), 151-–168.
\bibitem{HVA} M. Haneczok, W. Van Assche,
\textit{Interlacing properties of zeros of multiple orthogonal polynomials}, manuscript
\bibitem{Ismail} M.E.H. Ismail,
\textit{ Classical and Quantum Orthogonal Polynomials in One Variable},
Encyclopedia of Mathematics and its Applications \textbf{98}, Cambridge University Press, 2005 (paperback edition, 2009).
\bibitem{Joh} K. Johansson,
\textit{Discrete orthogonal polynomial ensembles and the Plancherel measure},
Ann.\ of Math.\ \textbf{153} (2001), 259--296.
\bibitem{KM} S. Karlin, J.L. McGregor,
\textit{Many server queueing processes with Poisson input and exponential service times},
Pacific J. Math.\ \textbf{8} (1958), 87--118.
\bibitem{KuiVA}  A.B.J. Kuijlaars, W. Van Assche, 
\textit{Extremal polynomials on discrete sets}, 
Proc.\ London Math.\ Soc.\ (3) \textbf{79} (1999), 191–-221.
\bibitem{Lee} D.W. Lee,
\textit{Difference equations for discrete classical multiple orthogonal polynomials},
J. Approx.\ Theory \textbf{150} (2008), 132--152. 
\bibitem{MaVA} M. Maejima, W. Van Assche, 
\textit{Probabilistic proofs of asymptotic formulas for some classical polynomials}, 
Math.\ Proc.\ Cambridge Philos.\ Soc.\ \textbf{97} (1985), 499–-510.
\bibitem{Vinet} H. Miki, L. Vinet, A. Zhedanov, 
\textit{Non-Hermitian oscillator Hamiltonians and multiple Charlier polynomials},
arXiv:1106.5243 [math-ph]
\bibitem{NikiSor} E.M. Nikishin, V.N. Sorokin,
\textit{Rational Approximations and Orthogonality}, 
Translations of Mathematical Monographs \textbf{92}, Amer.\ Math.\ Soc., Providence, RI, 1991.
\bibitem{Ou} Chun-Hua Ou, R. Wong,
\textit{Global Asymptotics of the Charlier polynomials via the Riemann-Hilbert method},
talk at ``Special Functions in the 21st Century: Theory and Applications'', Washington DC, April 6--8, 2011. \\
\texttt{http://math.nist.gov/\~{}DLozier/SF21/SF21slides/Ou.pdf}
\bibitem{PVA} K. Postelmans, W. Van Assche,
\textit{Multiple little $q$-Jacobi polynomials},
J. Comput.\ Appl.\ Math.\ \textbf{178} (2005), 361--375.
\bibitem{PR} M. Pr\'evost, T. Rivoal,
\textit{Remainder Pad\'e approximants for the exponential function},
Contr.\ Approx.\ \textbf{25} (2007), 109--123.
\bibitem{RW} Bo Rui, R. Wong, 
\textit{Uniform asymptotic expansion of Charlier polynomials}, 
Methods Appl.\ Anal.\ \textbf{1} (1994), 294-–313. 
\bibitem{WVA} W. Van Assche,
\textit{Difference equations for multiple Charlier and Meixner polynomials},
Proceedings of the Sixth International Conference on Difference Equations, CRC Press, Boca Raton, FL, 2004, pp.~549–-557.
\bibitem{WVA2} W. Van Assche,
\textit{Nearest neighbor recurrence relations for multiple orthogonal polynomials}, J. Approx.\ Theory \textbf{163} (2011), 1427--1448.
\bibitem{vD} E.A. van Doorn, A.I. Zeifman,
\textit{On the speed of convergence to stationarity of the Erlang loss system},
Queueing Syst.\ \textbf{63} (2009), 241--252.
\end{thebibliography}
\end{document}